\newtheorem{theorem}{Theorem}
\newtheorem{problem}[theorem]{Problem}
\newtheorem{lemma}[theorem]{Lemma}
\newtheorem{corollary}[theorem]{Corollary}
\newtheorem{conjecture}[theorem]{Conjecture}
\newtheorem{definition}[theorem]{Definition}
\newcounter{encoding}
\newcommand{\fracpt}[1]{\left\{#1\right\}}
\newcommand{\N}{\mathbb{Z}^+}
\newcommand{\Z}{\mathbb{Z}}
\newcommand{\floor}[1]{\left\lfloor #1 \right\rfloor}
\title{Improved Ramsey-type theorems for Fibonacci numbers and other sequences}
\author{William J. Wesley}
\date{\today}
\begin{document}

\maketitle

\section{Introduction}
Van der Waerden's theorem \cite{VanDerWaerden_original} is a celebrated result in Ramsey theory. It states that for all positive integers $k$ and $r$, there exists a smallest integer $n = w(k,r)$, called the \emph{van der Waerden number}, such that every $r$-coloring of $[n]:=\{1,2,3,\dots,n\}$ contains a monochromatic $k$-term arithmetic progression. The problem of computing exact values of $w(k,r)$ is remarkably difficult: the largest known values are $w(6,2) = 1132$, $w(4,3) = 293$, and $w(3,4) = 76$ \cite{VDW26,VDW34,VDW43}. In this article we study two variants of van der Waerden numbers.

 A natural modification of the ``van der Waerden problem" is to impose the additional restriction that the common difference of the arithmetic progression must belong to a prescribed set $D$. We let $AP_D$ denote the set of all arithmetic progressions with common difference in $D$, and we let $n = n(AP_D,k;r)$ denote the smallest integer such that every $r$-coloring of $[n]$ contains a monochromatic $k$-term arithmetic progression whose common difference is in $D$. If $n(AP_D,k;r)$ exists for all $k$, then we say that $AP_D$ is \emph{$r$-regular}. If $AP_D$ is $r$-regular for all $r$, then we say that $AP_D$ is \emph{regular}. The largest $r$ such that $AP_D$ is regular is called the \emph{degree of regularity} of $AP_D$. 

Stated slightly differently, van der Waerden's theorem says that every $r$-coloring of $[w(k,r)]$ contains a monochromatic sequence $x_1 < x_2 < \dots < x_k$ where the differences between consecutive terms are all equal. Another modification of this problem is to ask for monochromatic sequences whose differences between consecutive terms are not necessarily equal, but all belong to some prescribed set $D \subset \Z^+$. We recall some terminology introduced in \cite{LandmanRobertson_SpecialGaps}. A sequence of integers $x_1 < \dots < x_k$ is called a $k$-term $D$-\emph{diffsequence} if $x_{i+1}-x_i \in D$ for $1 \le i \le k-1$. Unlike in van der Waerden's theorem, the existence of $D$-diffsequences is not guaranteed for all $D$. If $D$ is the set of odd positive integers, then the 2-coloring that colors the odd integers the first color and the even integers the second color contains no monochromatic $k$-term $D$-diffsequences, even for $k = 2$. We define $\Delta(D,k;r)$ to be the smallest number $n$ such that every $r$-coloring of $[n]$ contains a $k$-term monochromatic $D$-diffsequence. If $\Delta(D,k;r)$ exists for all $k$, then we say $D$ is \emph{$r$-accessible}. The largest integer $r$ for which $D$ is $r$-accessible is called the \emph{degree of accessibility} of $D$, denoted $doa(D)$. 

Landman and Robertson studied the existence of and bounds for the numbers $\Delta(D,k;r)$ for various choices of $D$, with particular emphasis on translates of the set of primes \cite{LandmanRobertson_SpecialGaps}. More recent work by Clifton \cite{CliftonDiffsequences} and Chokshi, Clifton, Landman, and Sawin \cite{RamseyFunctions_RestrictedGaps} has examined diffsequences involving sets such as 
$D = \{2^i : i\ge 0\}$ and given bounds on $\Delta(D,k;2)$. 

In this paper we are interested in the case when $D$ is the set of Fibonacci numbers $F = \{1,2,3,5,8,13,\dots\}$. Ramsey results involving sequences that satisfy the Fibonacci recurrence, among other linear recurrences, have been of general interest \cite{BertokNyul_MonochromeLinRecurrences,Landman_RamseyFunctions_Recurrences,NyulRauf_LinearRecurrence,HarborthMaasberg_RadoLinRecurrences,NyulRauf_LinRecurrenceConstantCoeff}. Landman and Robertson showed that $F$ is 2-accessible and left the matter of determining $doa(F)$ as an open question \cite{LandmanRobertson,LandmanRobertson_SpecialGaps}. In \cite{Fibonacci_Ramsey}, Ardal, Gunderson, Jungi\'c, Landman, and Williamson showed that $dor(AP_F)\le 5$ by constructing an explicit 6-coloring of $\Z^+$ that does not contain any monochromatic 2-term $F$-diffsequences. Moreover, they proved that $1 \le dor(AP_F) \le 3$ and gave several values of $\Delta(F,k;r)$. This paper builds upon the work in \cite{Fibonacci_Ramsey}: our main results are improvements on the bounds for $dor(AP_F)$ and $doa(F)$.
\begin{theorem}\label{Theorem_doa_F_le_3}
The degree of accessibility of the Fibonacci numbers $F$ is at most three. 
\end{theorem}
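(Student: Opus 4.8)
The plan is to prove the contrapositive of accessibility. Since $doa(F)\le 3$ is precisely the statement that $F$ is \emph{not} $4$-accessible, it suffices to exhibit a single $4$-coloring $c\colon \N\to\{1,2,3,4\}$ together with an integer $k$ such that $c$ admits no monochromatic $k$-term $F$-diffsequence. Equivalently, I must build a $4$-coloring in which every color class has \emph{bounded} $F$-diffsequence length, and then take $k$ to be one more than that bound; the theorem then follows immediately from the definition of $\Delta(F,k;4)$. Thus the whole problem reduces to an explicit construction plus a uniform length bound.

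Before constructing $c$ it is worth recording two constraints that dictate its shape. First, $c$ cannot be periodic: every modulus $m$ divides some Fibonacci number $F_{\alpha(m)}$ (its rank of apparition), so if $c$ had period $m$ then any residue class would contain the infinite monochromatic $F$-diffsequence $x,\,x+F_{\alpha(m)},\,x+2F_{\alpha(m)},\dots$, whose consecutive gaps all equal $F_{\alpha(m)}\in F$. Hence no periodic coloring can cap diffsequence lengths, and $c$ must be genuinely aperiodic. Second, with only four colors one cannot even forbid $2$-term diffsequences: any four consecutive integers have pairwise differences in $\{1,2,3\}\subseteq F$ and so must receive four distinct colors. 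An easy induction on windows then forces $c(n+4)=c(n)$ for all $n$, i.e.\ period $4$, which is impossible since the Fibonacci distance $8\equiv 0\pmod 4$ would produce a monochromatic pair. Consequently the coloring we seek must \emph{tolerate} short monochromatic diffsequences while preventing long ones.

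For the construction I would exploit the self-similar identity $F_{n+1}=F_n+F_{n-1}$ together with the Zeckendorf numeration system. Writing each $x$ in its Zeckendorf form $x=\sum_i \epsilon_i F_i$ (with no two consecutive $\epsilon_i$ equal to $1$), I would let $c(x)$ be a bounded, finite-state function of this digit string. Concretely, the coloring is built recursively: having colored $[0,F_n)$, color $[0,F_{n+1})=[0,F_n)\cup\bigl(F_n+[0,F_{n-1})\bigr)$ by keeping the coloring of the first block and placing a color-permuted copy of the coloring of $[0,F_{n-1})$ on the second, the permutations chosen so that a diffsequence crossing the block boundary cannot inherit a long diffsequence from either side. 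This yields an aperiodic $4$-coloring describable by a finite automaton reading the Zeckendorf digits, a format well suited to being found and certified by the same computational search techniques used elsewhere in the paper.

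The crux is the uniform bound on monochromatic $F$-diffsequence length, which I would establish by induction on the block level $n$. A monochromatic diffsequence meeting $[0,F_{n+1})$ either stays inside one sub-block, where the inductive bound applies, or it crosses the boundary; one must show the number of crossings is bounded and that the chosen permutations prevent two long sub-diffsequences from being concatenated. The delicate case, and the main obstacle, is a step $x\mapsto x+F_j$ with $F_j$ \emph{large} relative to $x$: such a step leaps between far-apart scales while perturbing the high-order Zeckendorf digits only slightly, and so threatens to splice together long diffsequences living at different levels. Controlling these long jumps, so that each step of a monochromatic diffsequence strictly increases a well-founded rank read off from the automaton state, is what forces finiteness of the bound. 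The small base cases — equivalently, the nonexistence of a $4$-coloring of a suitable finite initial segment avoiding $(k-1)$-term diffsequences for the chosen $k$ — can be discharged by a finite constraint (SAT) computation, after which the recursion propagates the bound to all of $\N$.
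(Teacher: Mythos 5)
Your reduction is sound as far as it goes: $doa(F)\le 3$ does follow from exhibiting one $4$-coloring and one $k$ with no monochromatic $k$-term $F$-diffsequence, and your two preliminary observations (no periodic coloring can work, since every modulus divides some Fibonacci number; and $2$-term avoidance is impossible with four colors, forcing period $4$ and then a monochromatic pair at distance $8$) are both correct. But from that point on you have a research program, not a proof. The coloring is never actually defined: the color permutations at each block-doubling step are ``chosen so that'' crossings cause no harm, with no candidate choice given and no value of $k$ identified. More seriously, the inductive step as stated would fail. Your dichotomy --- a monochromatic diffsequence either stays inside one sub-block or crosses the boundary boundedly often --- breaks down precisely in the case you flag yourself: a diffsequence may take strictly increasing Fibonacci gaps (say gaps $f_5, f_7, f_9, \dots$), so that \emph{every} step lands in a fresh block at a new scale; no fixed level sees a bounded number of crossings, and the ``well-founded rank read off from the automaton state'' that is supposed to control these jumps is never defined, let alone shown to be monotone. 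You candidly name this as the main obstacle and then defer it; a SAT computation of base cases cannot close that gap, because it is the recursion step, not the base case, that is unproven. So the proposal is missing the one idea that makes the theorem true.

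For comparison, the paper's proof avoids Zeckendorf self-similarity entirely and gets finiteness from an equidistribution-style drift argument with the explicit value $k=4$. First, a parity-doubling lemma (Lemma \ref{Lemma_2Coloring_implies_4Coloring}) reduces the problem from four colors and gap set $F$ to two colors and the gap set $G=\{f_{3i}/2\}$: any monochromatic $F$-diffsequence under the doubled coloring lies in one parity class and halves to a $G$-diffsequence. Second, the $2$-coloring is the concrete word $S=\mu(F_\infty)$, the Thue--Morse morphism applied to the infinite Fibonacci word, whose symbol positions are governed by the Beatty sequence $\lfloor m\phi\rfloor$. The key quantitative fact (Lemma \ref{Lemma_master_lemma}) is that each step of a would-be monochromatic $G$-diffsequence forces the fractional part $\fracpt{m_i\phi}$ to decrease by at least about $0.28$, with the weakest-drop transition unable to occur twice consecutively; four terms would then force a total drop below $-1$, which is absurd since fractional parts lie in $[0,1)$. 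This monotone-drift mechanism is exactly the ``long-jump control'' your sketch needs and lacks: it bounds every step uniformly in the gap size, rather than attempting a scale-by-scale block induction. If you wish to pursue your automaton approach, you would need to supply an analogous uniform invariant; as written, the argument does not establish the theorem.
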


\begin{theorem}\label{Theorem_dor_AP_F}
The set $AP_F$ of arithmetic progressions whose gaps are Fibonacci numbers is not 2-regular. Moreover, $dor(AP_F) = 1$.  
\end{theorem}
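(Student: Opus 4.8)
Since a single colour makes every $k$-term progression with difference $1\in F$ monochromatic once $n\ge k$, we have $n(AP_F,k;1)=k$ for all $k$, so $AP_F$ is $1$-regular and $dor(AP_F)\ge 1$. Hence the whole statement reduces to proving that $AP_F$ is \emph{not} $2$-regular: that there is a fixed $k$ for which $n(AP_F,k;2)$ does not exist. By a standard compactness (K\"onig's lemma) argument it is equivalent to produce, for this $k$, a single $2$-colouring $\chi\colon\N\to\{0,1\}$ containing no monochromatic $k$-term arithmetic progression whose common difference is a Fibonacci number; equivalently, for every $m\ge 2$ and every $a$ the block $a,a+F_m,\dots,a+(k-1)F_m$ is not $\chi$-constant. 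The plan is to build such a $\chi$ explicitly and then verify this one bounded-length condition across \emph{all} Fibonacci scales $F_m$ at once.

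Before constructing $\chi$ I would record two obstructions that dictate its shape. First, no eventually periodic colouring can work: for any period $p$ the rank of apparition yields a Fibonacci number $F_m$ with $p\mid F_m$, and then every progression with difference $F_m$ stays inside a single residue class $\bmod\, p$, hence is $\chi$-constant and monochromatic of unbounded length. Thus $\chi$ must be genuinely aperiodic. Second, no ``rotation'' colouring $\chi(n)=\mathbf{1}[\{n\alpha\}\in I]$ works either: the defining identity $\phi F_m=F_{m+1}-\psi^m$ (with $\psi=-1/\phi$) shows that $\{F_m\alpha\}$ lies within $|\psi|^{m}$ of an integer for the $\alpha$ tied to $\phi$, so a progression of difference $F_m$ maps to an arithmetic progression on the circle of step $O(|\psi|^m)$; for large $m$ its $k$ terms cluster inside a single interval $I$ and the progression is monochromatic. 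Together these force a colouring that is aperiodic and simultaneously ``active'' at every Fibonacci scale.

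The construction I would attempt is self-similar, driven by $F_{m+1}=F_m+F_{m-1}$. Writing $C_m$ for the colouring of $[1,F_m]$, I would define $C_{m+1}$ from $C_m$ and $C_{m-1}$ by concatenation in the manner of the Fibonacci substitution, but with a colour-flip inserted between the successive blocks so that the boundary between the scale-$F_m$ and scale-$F_{m-1}$ pieces forces a change of colour. The flips exist precisely to defeat the clustering of the previous paragraph: a progression of difference $F_m$ that would otherwise sit inside one substitution block is made to cross a flipped boundary within boundedly many steps. The analysis then proceeds by renormalization, deleting the finest scale to send a progression of difference $F_m$ to a progression of difference $F_{m-1}$ (or $F_{m-2}$) on the parent colouring, using $\gcd(F_m,F_n)=F_{\gcd(m,n)}$ to track how the terms distribute among the blocks, and inducting on $m$ down to finitely many base cases checked directly.

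The hard part, and the crux of the theorem, is the inductive step: showing that one fixed $k$ works uniformly at \emph{every} scale. I must rule out two competing failures simultaneously — small differences $F_m$ demand short monochromatic runs (fine alternation), while medium and large differences demand that the colour oscillate faster than the clustering rate $|\psi|^m$ — and verify that inserting the scale-$F_m$ structure neither lengthens a monochromatic progression already present at smaller differences nor creates a new long one at difference $F_m$. Establishing that the flipped substitution really does cap every Fibonacci-difference run by a scale-independent constant $k$, while confirming that the modification has not reintroduced the residue-class confinement of the first obstruction, is where essentially all the work lies; once that uniform bound is secured, compactness delivers the colouring of $\N$ and hence the non-$2$-regularity of $AP_F$, giving $dor(AP_F)=1$.
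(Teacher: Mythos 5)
Your reduction is sound (the $1$-regularity observation, the compactness equivalence, and the two obstructions are all correct and well taken: the Pisano-period argument kills eventually periodic colorings, and the identity $\phi f_m - f_{m+1} = -(-\phi)^{-m}$ kills rotation colorings with slope in $\Z[\phi]$ — note the same clustering occurs for slope $\sqrt 5$, since $f_n\sqrt 5 = \ell_n - 2(-\phi)^{-n}$). But the proposal stops exactly where the theorem begins. The ``flipped Fibonacci substitution'' is never defined, no value of $k$ is ever fixed, and you yourself flag that the uniform-in-$m$ bound ``is where essentially all the work lies.'' That step is not a routine verification one can defer: as sketched, the renormalization would not even be well posed. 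Desubstituting a difference-$f_m$ progression does not yield an arithmetic progression of difference $f_{m-1}$ — the preimage differences fluctuate with the local block structure, so the induction hypothesis (no long monochromatic \emph{progression}) does not apply to the renormalized object; one must instead control a widening family of near-progressions with perturbed gaps. Worse, inserting color flips at block boundaries destroys the exact self-similarity that ``deleting the finest scale'' relies on, so the parent of the modified coloring is not the previous stage of the construction, and the identity $\gcd(f_m,f_n)=f_{\gcd(m,n)}$ gives no handle on how the terms distribute among blocks. So this is a plausible research plan with the crux missing, not a proof.

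For contrast, the paper's proof needs neither flips nor induction on scales. It takes $k=5$ and the single coloring $T=\nu(F_\infty)$ with $\nu\colon 0\mapsto 1,\ 1\mapsto 00$, whose ones occupy positions $2\floor{m\phi}-m$. A monochromatic progression of difference $f_n$ inside the ones forces $m_{i+1}-m_i\in\{\floor{f_n/(2\phi-1)},\lceil f_n/(2\phi-1)\rceil\}$ with a parity constraint, whence each step moves the fractional part $\fracpt{m_i\phi}$ by a fixed drift determined by $\fracpt{f_n/\sqrt5}$. Since $f_n/\sqrt 5 = \ell_n/5 - 2(-\phi)^{-n}/5$ and Lucas numbers are never divisible by $5$, that fractional part clusters near $r/5$ with $r\in\{1,2,3,4\}$, so the drift has absolute value exceeding $\tfrac13$ and five terms would push $\fracpt{m_i\phi}$ outside $[0,1)$; zeros are handled by shifting each $x_i$ by $\pm 2$ to an adjacent one, perturbing the gap by $\epsilon_i\in\{-4,0,4\}$, and checking the drift sums still exceed $1$ in absolute value. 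In other words, the quantitative mechanism your plan lacks is supplied there by a Diophantine fact (Lucas numbers mod $5$) that keeps the relevant fractional parts uniformly away from integers at \emph{every} Fibonacci scale at once — precisely the uniform bound your inductive scheme would have to manufacture by hand.
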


Our proofs of these results involve combinatorial words that produce colorings that avoid either $F$-diffsequences or arithmetic progressions with gap in $F$ of a certain length. The search for these words was aided by the Online Encyclopedia of Integer Sequences (OEIS) and the computational power of SAT solvers, which have been used extensively to compute exact bounds for van der Waerden numbers and other related numbers such as Schur and Rado numbers \cite{VDW26,VDW34,WJW_Rado_ISSAC,BMRS_3ColorSchur,SchurFive}. We also use SAT solvers to compute other exact values of $\Delta(D,k;r)$ for other sets $D$. 

This paper is organized as follows. In Section \ref{Section_Preliminaries}, we formally define some of the objects studied in this paper and recall some well-known properties of Fibonacci numbers. Section \ref{Section_MainProofs} contains the proofs of Theorems \ref{Theorem_doa_F_le_3} and \ref{Theorem_dor_AP_F}. We conclude in Section \ref{Section_Experiments} with some experimental data and additional questions. 
\section{Preliminaries}\label{Section_Preliminaries}

In this section we collect several results that are used in the proofs of Theorems \ref{Theorem_doa_F_le_3} and \ref{Theorem_dor_AP_F}. We also fix the following notation for numbers and objects used throughout this paper.

We let $F := \{1,2,3,5,8,\dots\}$ denote the set of Fibonacci numbers, and let $G := \{1,4,17,72,\dots\} = \{ \frac f 2 : f \in F\} \cap \Z $. We let $f_i$ be the $i$-th term of the Fibonacci sequence, where $f_1 = f_2 = 1$ and $f_{n+1} = f_n +f_{n-1}$ for $n > 2$. Similarly, we let $g_i = \frac{f_{3i}}2$. We denote the Lucas numbers $\ell_n$ by $\ell_0 = 2, \ell_1 = 1$, $\ell_n = \ell_{n-1}+\ell_{n-2}$ for $n \ge 2$. For any real number $r$, we denote the fractional part of $r$ by $\{r\}:= r-\floor{r}$. We let $\phi$ denote the golden ratio $\phi := \frac{1+\sqrt{5}}2.$ When a number $\Delta(D,k;r)$ does not exist, we write $\Delta(D,k;r) = \infty$, and similarly for $n(AP_D,k;r)$.

% \begin{lemma}\label{Lemma_f6n_div_by_4}
% The Fibonacci number $f_{n}$ is divisible by 4 if and only if $n$ is divisible by 6. 
% \end{lemma}
% \begin{proof}
% This follows easily by considering the Fibonacci sequence modulo 4, which is periodic with period 6 and begins $1,1,2,3,1,0$. 
% \end{proof}

The following lemma consists of two well-known results that give exact formulas for the Fibonacci numbers $f_n$ and Lucas numbers $\ell_n$ in terms of $\phi$. 

\begin{lemma}\label{Lemma_fn_closed_form}
The following identities for Fibonacci numbers $f_n$ and Lucas numbers $\ell_n$ hold.
\begin{enumerate}[(i)] 
  \item $f_n = \frac{\phi^n-(-\phi)^{-n}}{\sqrt{5}}$ for $n \ge 1$.
    \item  $\ell_n = \phi^n +(-\phi)^{-n}$ for $n \ge 0$.
\end{enumerate}
\end{lemma}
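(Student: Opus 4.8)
The plan is to derive both identities from the single observation that $\phi$ and its conjugate $\psi := \frac{1-\sqrt{5}}{2}$ are the two roots of the characteristic polynomial $x^2 - x - 1$ of the Fibonacci recurrence. Since $\phi\psi = \frac{(1+\sqrt{5})(1-\sqrt{5})}{4} = -1$, we have $\psi = -\phi^{-1} = (-\phi)^{-1}$, and hence $\psi^n = (-\phi)^{-n}$. Substituting this into the claimed formulas, it therefore suffices to prove the equivalent statements $f_n = \frac{\phi^n - \psi^n}{\sqrt{5}}$ and $\ell_n = \phi^n + \psi^n$, which I would establish by induction.

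First I would record that, because $\phi^2 = \phi + 1$ and $\psi^2 = \psi + 1$, multiplying by $\phi^{n-1}$ and $\psi^{n-1}$ respectively gives $\phi^{n+1} = \phi^n + \phi^{n-1}$ and $\psi^{n+1} = \psi^n + \psi^{n-1}$. Consequently any sequence of the form $c_1\phi^n + c_2\psi^n$ satisfies the recurrence $x_{n+1} = x_n + x_{n-1}$, and in particular so do both $a_n := \frac{\phi^n - \psi^n}{\sqrt{5}}$ and $b_n := \phi^n + \psi^n$. A second-order linear recurrence of this type has a unique solution once two consecutive values are fixed, so it remains only to match initial conditions: a direct computation using $\phi - \psi = \sqrt{5}$ and $\phi + \psi = 1$ gives $a_1 = a_2 = 1 = f_1 = f_2$, while $b_0 = 2 = \ell_0$ and $b_1 = 1 = \ell_1$. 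Hence $a_n = f_n$ for all $n \ge 1$ and $b_n = \ell_n$ for all $n \ge 0$.

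There is no serious obstacle here, as both statements are classical; the only points requiring care are bookkeeping rather than difficulty. The main one is the algebraic step $\psi = (-\phi)^{-1}$, which is precisely what lets me rewrite the conjugate root in the exponential form $(-\phi)^{-n}$ appearing in the statement of the lemma. The other is keeping the two indexing conventions straight, since the Fibonacci identity is asserted from $n = 1$ (with $f_1 = f_2 = 1$) whereas the Lucas identity is asserted from $n = 0$ (with $\ell_0 = 2$ and $\ell_1 = 1$); matching the base cases to these respective starting indices is all that the induction requires.
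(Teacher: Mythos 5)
Your proof is correct: the identity $\psi^n = (-\phi)^{-n}$ follows from $\phi\psi = -1$ as you say, both $a_n$ and $b_n$ satisfy the Fibonacci recurrence because $\phi$ and $\psi$ are roots of $x^2 - x - 1$, and the base cases $a_1 = a_2 = 1$, $b_0 = 2$, $b_1 = 1$ check out, so the induction closes. For comparison, the paper offers no proof at all---it states the lemma as a pair of well-known results (Binet's formulas for the Fibonacci and Lucas numbers), so your argument, which is the standard textbook derivation, simply supplies the classical proof the paper takes for granted.
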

An immediate consequence of Lemma \ref{Lemma_fn_closed_form} is the following identity.
\begin{corollary}\label{Cor_divByPhi_all}
For $n \ge 2$, $\frac {f_n}{\phi} -f_{n-1} = (-1)^{n+1}\phi^{-n}.$
\end{corollary}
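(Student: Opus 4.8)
The plan is to prove the identity by directly substituting the closed form for the Fibonacci numbers from Lemma~\ref{Lemma_fn_closed_form}(i). Since both $f_n$ and $f_{n-1}$ admit that closed form, I would write
\[
\frac{f_n}{\phi} - f_{n-1} = \frac{1}{\sqrt5}\left( \frac{\phi^n - (-\phi)^{-n}}{\phi} - \phi^{n-1} + (-\phi)^{-(n-1)} \right),
\]
and observe that the two dominant terms $\phi^{n-1}$ cancel immediately. This leaves only the contributions coming from the subdominant $(-\phi)^{-n}$ pieces, which is exactly the content of the corollary: the left-hand side measures the tiny discrepancy between $f_n/\phi$ and the nearby integer $f_{n-1}$.

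The second step is to simplify the two surviving terms. The key algebraic facts I would use are the sign identity $(-\phi)^{-m} = (-1)^m\phi^{-m}$ and the relation $\phi + \phi^{-1} = \sqrt5$ (equivalently $\phi^2 = \phi + 1$). After rewriting the signs, I would factor out $(-1)^{n+1}\phi^{-n}/\sqrt5$; the bracketed remainder then collapses to $\phi^{-1} + \phi = \sqrt5$, the $\sqrt5$ cancels against the overall $1/\sqrt5$, and what is left is precisely $(-1)^{n+1}\phi^{-n}$.

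As an alternative that avoids the fraction, I could instead start from the relation $\phi^n = f_n\phi + f_{n-1}$, which holds because $\phi$ is a root of $x^2 = x + 1$. Solving for $f_{n-1}$ and substituting gives $\frac{f_n}{\phi} - f_{n-1} = f_n(\phi + \phi^{-1}) - \phi^n = f_n\sqrt5 - \phi^n$, and then part~(i) supplies $f_n\sqrt5 = \phi^n - (-\phi)^{-n}$, so the $\phi^n$ terms cancel and we are left with $-(-\phi)^{-n} = (-1)^{n+1}\phi^{-n}$. This is the same computation organized slightly differently and would serve as an internal consistency check.

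The only real obstacle is bookkeeping the signs and exponents in the $(-\phi)^{-n}$ factors; there is no conceptual difficulty, which is why the result is stated as an immediate corollary. To guard against a sign error I would verify the base case $n = 2$, where $f_2/\phi - f_1 = \phi^{-1} - 1 = \tfrac{\sqrt5 - 3}{2}$ agrees with $(-1)^{3}\phi^{-2} = -\tfrac{3-\sqrt5}{2}$, confirming the exponent and parity of the claimed sign.
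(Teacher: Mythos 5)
Your proposal is correct and matches the paper's intended argument: the paper presents this corollary as an immediate consequence of Lemma~\ref{Lemma_fn_closed_form}(i), and your direct substitution of the closed form, with the cancellation of the $\phi^{n-1}$ terms and the identity $\phi + \phi^{-1} = \sqrt5$, is exactly that computation. Your alternative via $\phi^n = f_n\phi + f_{n-1}$ and the base-case check at $n=2$ are sound but add nothing beyond what the paper's one-line derivation requires.
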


% \begin{lemma} \label{Lemma_divByPhi_error_f6n}
% For all $n\ge 1$, the Fibonacci number $f_n$ is the nearest integer to $\frac{\phi^n}{\sqrt 5}.$
% \end{lemma}

% \begin{proof}
% This follows from Lemma \ref{Lemma_fn_closed_form} and the fact that $\left | \frac{(-\phi)^{-n}}{\sqrt{5}} \right| < \frac 12$ for all $n$. 
% \end{proof}

% \begin{lemma}
% $\left|\frac{f_{6n}}{4\phi} -\frac{f_{6n-1}}4\right| < \frac{1}{50} $ for all $n \ge 1$. 
% \end{lemma}
% \begin{proof}
% By Lemma \ref{Lemma_fn_closed_form}, $$\epsilon_n:=\left|\frac{f_{6n}}{4\phi} -\frac{f_{6n-1}}4\right| = \left| \frac {\phi^{6n-1} - (-\phi)^{-6n-1}}4 - \frac {\phi^{6n-1} - (-\phi)^{-6n+1}}4\right| = \left|\frac{(-\phi)^{-6n+1}-(-\phi)^{-6n-1}}4\right|.$$ By direct calculation, $\epsilon_1 \approx .139 < \frac 1 {50}$, and since $\epsilon_n$ decreases as $n$ increases, it follows that $\epsilon_n < \frac 1 {50}$ for all $n \ge 1$.
% \end{proof}

Recall that a \emph{word} is a (possibly infinite) sequence of symbols of a finite, nonempty alphabet. In this paper we consider only words over the alphabet $\{0,1\}$. The $n$-th \emph{Fibonacci word} $F_n$ is given by 
$$
F_0 = 0, \quad
F_1 = 01, \quad
F_n = F_{n-1}F_{n-2} \text{ if $n\ge 2$,}
$$
where $F_{n-1}F_{n-2}$ denotes the concatenation of $F_{n-1}$ and $F_{n-2}$. The \emph{infinite Fibonacci word} is the limit $F_\infty = 010010100100\dots$, the unique word that contains $F_n$ as a prefix for all $n$. We use the infinite Fibonacci word to define two new words, $S$ and $T$, which provide us colorings used in the proof of Theorems \ref{Theorem_doa_F_le_3} and \ref{Theorem_dor_AP_F}, respectively. 
\begin{definition}
Let $\mu$ be the word morphism given by $0 \mapsto 10$, $1\mapsto 01$, and let $\nu$ be the word morphism given by $0 \mapsto 1$, $1\mapsto 00$. The words $S$ and $T$ are given by $$S:=\mu(F_\infty)=1001101001\dots, \quad T:=\nu(F_\infty)=1001100100\dots.$$
\end{definition}

The morphism $\mu$ is known as the \emph{Thue-Morse morphism}. The fixed point of $\mu$ (which is unique up to binary complement) is the famous Thue-Morse infinite word $0110100110010\dots$, which has many interesting properties (see, for example, \cite{Christoffel_and_Repetitions}). 

The following lemma lists some key properties of the words $F_\infty, S$, and $T$. In particular, it shows that $F_\infty$ is \emph{Sturmian}. Sturmian words are well-studied and have several useful properties. In particular, the positions of the ones in a Sturmian word are given by terms in a \emph{Beatty sequence}, a sequence of the from $a_n = \floor{n\alpha}$ for some positive irrational $\alpha$. This property allows us to determine the positions of ones in $S$ and $T$ as well. We refer the interested reader to \cite{Christoffel_and_Repetitions} for additional results on Sturmian words. 
\begin{lemma}\label{Lemma_fibWord_floor_phi}
\begin{enumerate}[(i)]
    
\item 
The infinite Fibonacci word $F_\infty$ satisfies $$F_\infty(n) = \begin{cases}
0 & \text{if } n = \floor{m\phi} \text{ for some integer } m, \\
1 & \text{otherwise.}
\end{cases}
$$
 
\item If $n \neq \floor{m\phi}$ for all integers $m$, then there exist integers $m'$ and $m''$ such that $n+1 = \floor{m' \phi}$ and $n-1 = \floor{m''\phi}$. 
 
\item The word $S$ satisfies 

$$S(n) = \begin{cases} 
0 & \text{if } n \text{ is even and } \frac n 2 = \floor{m\phi} \text{ for some integer } m,\\
1 & \text{if } n \text{ is even and } \frac n 2 \neq \floor{m\phi} \text{ for all integers } m, \\
0 & \text{if } n \text{ is odd and } \frac {n+1} 2 \neq \floor{m\phi} \text{ for all integers } m,\\
1 & \text{if } n \text{ is odd and } \frac {n+1} 2  = \floor{m\phi} \text{ for some integer } m.
\end{cases}
$$

\item  The word $T$ satisfies 

$$T(n) = \begin{cases}
1 & \text{if } n = 2\floor{m\phi}-m \text{ for some integer } m, \\
0 &\text{otherwise}. 
\end{cases}
$$
\end{enumerate}

\begin{proof}
The statement $(i)$ was proven in \cite{Stolarsky_Beatty_sequences}. The statement $(ii)$ follows easily from the fact that $1<\phi < 2$, and $(iii)$ follows from $(i)$ and the definition of $S$. The proof of $(iv)$ was originally given by Michel Dekking on the OEIS entry A287772 for $\nu F_\infty$. For completeness, we give a slightly different proof here. 

By $(i)$, the positions of the zeros in $F_\infty$ are given by the sequence $\floor{m\phi}$, and by the definition of $\nu$, every 1 in $T$ is obtained from a 0 in $F_\infty$. It therefore suffices to show that $\nu$ maps the 0 at position $\floor{m\phi}$ to the 1 at position $2\floor{m\phi} -m$. This is easy to verify for $m = 1$, and suppose it holds for $m = k$. 

We consider two cases, noting that $\floor{(k+1)\phi}-\floor{k\phi} \in \{ 1,2\}$ for all integers $k$.  First, if $\floor{(k+1)\phi} - \floor{k \phi} = 1$, then the $k$-th and $(k+1)$-th zeros in $F_\infty$ are adjacent, and so in $T$, the $k$-th and $(k+1)$-th ones are adjacent. By the induction hypothesis, the $k$-th one is in position $2\floor{k\phi}-k$ and the $(k+1)$-th one is in position 
$2\floor{k\phi} - k +1 = 2 \floor{(k+1)\phi}-(k+1)$ as desired. 

Now suppose $\floor{(k+1)\phi} - \floor{k\phi} = 2$. Then there is a one between the $k$-th and $(k+1)$-th zeros in $F_\infty$. Therefore in $T$, by the definition of $\nu$ and the induction hypothesis, the $(k+1)$-th one is in position $2 \floor{k\phi} -k + 3 = 2\floor{(k+1)\phi}-(k+1)$, which completes the proof. 
\end{proof}
\end{lemma}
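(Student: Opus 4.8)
The plan is to establish the four parts in order, taking the Beatty-sequence description of the zeros of $F_\infty$ as the foundation and then transporting it through the morphisms $\mu$ and $\nu$ by direct position counting. Part (i) I would simply cite: the identification of $F_\infty$ as the Sturmian word whose zeros occupy the lower Beatty sequence $\floor{m\phi}$ is classical, and is exactly the content of \cite{Stolarsky_Beatty_sequences}. For part (ii) the only input needed is $1 < \phi < 2$, which forces consecutive terms of $\floor{m\phi}$ to differ by $1$ or $2$. An integer $n$ fails to be of the form $\floor{m\phi}$ precisely when it is skipped by a gap of size $2$, and in that case the two Beatty terms bracketing the gap are $n-1$ and $n+1$, supplying the required indices $m''$ and $m'$.

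For part (iii), the key is that $\mu$ is length-uniform: it replaces each symbol by a block of length two, so the $k$-th symbol of $F_\infty$ occupies positions $2k-1$ and $2k$ of $S$. Reading off $\mu(0)=10$ and $\mu(1)=01$, the even position $2k$ carries the same bit as $F_\infty(k)$ while the odd position $2k-1$ carries its complement. Substituting the characterization from (i), with $k = n/2$ in the even case and $k = (n+1)/2$ in the odd case, yields all four branches of the stated formula for $S$ directly.

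The substantive part is (iv). Here $\nu$ is \emph{not} length-uniform, so positions no longer double, and I would avoid the gap-by-gap induction in favor of a single counting argument. The structural fact is that $\nu(0)=1$ and $\nu(1)=00$, so every $1$ in $T$ arises from a $0$ in $F_\infty$, and the ones of $T$ are in order-preserving bijection with the zeros of $F_\infty$. By (i) the $m$-th zero of $F_\infty$ sits at position $k=\floor{m\phi}$; among the first $k-1$ symbols there are then exactly $m-1$ zeros and hence $(k-1)-(m-1)=\floor{m\phi}-m$ ones. Accumulating the image lengths, the image of the $m$-th zero begins at position $1+\big[(m-1)\cdot 1 + (\floor{m\phi}-m)\cdot 2\big] = 2\floor{m\phi}-m$. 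Since that image is the single symbol $1$, the $m$-th one of $T$ lands at $2\floor{m\phi}-m$, which is the claim.

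The main obstacle I anticipate is purely bookkeeping: pinning down the off-by-one in the tally of preceding symbols (whether the $m$-th zero is itself counted among the first $k-1$ positions, and whether $T$ is indexed from $1$). The length-accumulation computation resolves this cleanly and has the side benefit of sidestepping the case split on whether $\floor{(k+1)\phi}-\floor{k\phi}$ equals $1$ or $2$ that an inductive proof would otherwise require.
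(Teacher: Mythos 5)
Your proposal is correct, and parts (i)--(iii) match the paper: (i) is cited to the same source, (ii) uses the same gap observation from $1<\phi<2$, and your explicit position-doubling computation for (iii) merely spells out what the paper dismisses as immediate from (i) and the definition of $S$. For part (iv), however, you take a genuinely different route. The paper proves the position formula $2\floor{m\phi}-m$ by induction on $m$, splitting into two cases according to whether $\floor{(k+1)\phi}-\floor{k\phi}$ equals $1$ or $2$, and tracking how the gap between consecutive zeros of $F_\infty$ translates into the gap between consecutive ones of $T$. You instead give a single closed-form counting argument: since the ones of $T$ are in order-preserving bijection with the zeros of $F_\infty$ (as $\nu(1)=00$ contributes no ones), and the $m$-th zero sits at position $\floor{m\phi}$ with exactly $m-1$ zeros and $\floor{m\phi}-m$ ones preceding it, accumulating image lengths $(m-1)\cdot 1+(\floor{m\phi}-m)\cdot 2$ places the $m$-th one of $T$ at $2\floor{m\phi}-m$ directly. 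Your computation checks out (e.g.\ $m=2$ gives position $4$, matching $T=1001100100\dots$), and it buys a cleaner, non-inductive proof that sidesteps the case analysis entirely; the only ingredient beyond (i) is the strict monotonicity of the Beatty sequence, which is immediate from $\phi>1$. The paper's induction, in exchange, is more local and self-checking --- each step verifies how a single gap propagates --- but your global tally is arguably the more transparent argument, and it generalizes readily to any non-uniform morphism whose blocks have known lengths.
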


% \begin{lemma}
% Suppose $x_1 = \floor{m_1\phi}$, $x_2 = \floor{m_2\phi}$, and $x_2-x_1 = g_i/2$ for some even $g_i$. Then $m_2 - m_1 = \frac{f_{6i-1}-1}4$. Moreover, $\frac{(f_{6n-1}-1)\phi}4 \to \frac{g_n}2-\frac{\phi}4$ as $n\to \infty$. 
% \end{lemma}

% \begin{proof}
% Since $g_i$ is even, by Lemma \ref{Lemma_f6n_div_by_4} it follows that $g_i = f_{6j}/2$ for some $j$. Notice that $\frac{g_i}{2} = m_2\phi -\fracpt{x_2}-m_1\phi +\fracpt{x_1},$ and rearranging gives $m_2 - m_1 = \frac{g_i}{2\phi} +\frac{\fracpt{x_2}-\fracpt{x_1}}{\phi} = \frac{f_{6j-1}}4 + \epsilon + \frac{\fracpt{x_2}-\fracpt{x_1}}{\phi}$. We have $|\epsilon| < \frac1{50}$ by Lemma \ref{Lemma_divByPhi_error_f6n}. Moreover, note that $f_{6j-1} \equiv 1 \pmod 4$ and $\fracpt{x_2}-\fracpt{x_1} < 1$. Since $m_2 -m_1$ is an integer it follows that $\epsilon +\frac{\fracpt{x_2}-\fracpt{x_1}}{\phi} < \frac 3 4$, so we must have $\epsilon +\frac{\fracpt{x_2}-\fracpt{x_1}}{\phi} = - \frac 14$. [THIS ACTUALLY IS PERFECT, JUST MOVE PHI OVER AND APPLY THIS LAST BIT A FEW TIMES AND THIS PROVES THE CLAIM!!!]
% \end{proof}

\section{Proofs of Main Theorems}\label{Section_MainProofs}
In this section we prove our main results, Theorems \ref{Theorem_doa_F_le_3} and \ref{Theorem_dor_AP_F}. In each case we construct a coloring of $\N$ and show by contradiction that it does not contain a suitable monochromatic diffsequence or arithmetic progression.
Our main technique in the proofs of Theorems \ref{Theorem_doa_F_le_3} and \ref{Theorem_dor_AP_F} is constructing a sequence of numbers $\{m_i\phi\}$ whose fractional parts are either strictly increasing or strictly decreasing. If there exist $i$ and $j$ such that $|\{m_i\phi\} - \{m_j\phi\}|\ge 1$, then this is a contradiction. 

\subsection{Proof of Theorem \ref{Theorem_doa_F_le_3}}
To prove $F$ is not 4-accessible, we must find a 4-coloring of $\N$ with no $k$-term $F$-diffsequences for some positive integer $k$. Instead of working directly with such a 4-coloring, we will use a 2-coloring that avoids $k$-term $G$-diffsequences. The following lemma shows that the existence of this 2-coloring is enough to prove that $F$ is not 4-accessible. 
\begin{lemma}\label{Lemma_2Coloring_implies_4Coloring}
Suppose $G$ is not 2-accessible, i.e. $\Delta(G,k;2) = \infty$ for some $k$. Then $F$ is not 4-accessible.
\end{lemma}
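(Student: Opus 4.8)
The plan is to establish a connection between $G$-diffsequences under a $2$-coloring and $F$-diffsequences under a $4$-coloring. The key arithmetic observation is the relationship $G = \{f/2 : f \in F\} \cap \Z$, which means that each element of $G$ is essentially ``half'' of an even Fibonacci number. Recall that the Fibonacci numbers $f_{3i}$ are precisely the even Fibonacci numbers, and $g_i = f_{3i}/2$. So a $G$-diffsequence $x_1 < x_2 < \cdots < x_k$ has gaps $x_{i+1} - x_i = g_{j_i}$ for various indices $j_i$, and doubling gives $2x_{i+1} - 2x_i = f_{3j_i} \in F$. This suggests that scaling a $G$-diffsequence by $2$ produces an $F$-diffsequence, but using only the \emph{even} Fibonacci numbers as gaps.

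First I would make this scaling precise. Suppose toward the contrapositive that $F$ is $4$-accessible; I want to deduce that $G$ is $2$-accessible. Given a $2$-coloring $\chi$ of $[n]$ with no $k$-term $G$-diffsequence, the goal is to build a $4$-coloring of some interval $[N]$ with no $k'$-term $F$-diffsequence (for a suitable $k'$ depending on $k$), which would contradict $4$-accessibility. The natural construction is to take a product or lift: color an integer $m \in [N]$ using a pair consisting of (a) the parity class of $m$ modulo $2$, refined appropriately, and (b) the color $\chi$ assigned to $\lfloor m/2 \rfloor$ or $\lceil m/2\rceil$. Since $4 = 2 \times 2$, the four colors should encode one bit of parity information and one bit coming from $\chi$. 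The parity bit is what blocks gaps that are odd Fibonacci numbers: if a monochromatic $F$-diffsequence is forced to use only even gaps (because consecutive terms share the same parity class), then halving it yields a monochromatic $G$-diffsequence in the $\chi$-coloring.

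The crucial step is arranging the parity component so that any long monochromatic $F$-diffsequence must use even gaps, hence lives entirely in one parity class and descends to a $G$-diffsequence. The subtlety is that $F$ contains both odd gaps (like $1,3,5,13,\dots$) and even gaps ($2,8,34,\dots$), so one cannot simply forbid odd gaps with a single parity coloring; instead I expect the argument to use the color to track parity in a way that a diffsequence alternating parities cannot stay monochromatic for too many steps. A clean way to organize this is to note that among the Fibonacci gaps, the even ones $f_{3i}$ occur with a fixed residue pattern, and by choosing the auxiliary $2$-coloring based on $m \bmod 2$ together with $\chi(\lceil m/2 \rceil)$, any monochromatic $F$-diffsequence of length exceeding a bounded threshold is forced into consecutive even gaps. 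Then dividing the terms by $2$ and reading off $\chi$ produces a monochromatic $G$-diffsequence of length $k$, contradicting the assumption on $\chi$.

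I expect the main obstacle to be controlling the odd-gap steps: a monochromatic $F$-diffsequence might use a bounded number of odd gaps interspersed among the even ones, so the reduction to a $G$-diffsequence only preserves a fraction of the terms. Consequently $k'$ must be chosen large enough (roughly a constant multiple of $k$, or larger) to guarantee that after discarding the odd-gap steps, enough consecutive even-gap terms survive to form a $k$-term $G$-diffsequence. Pinning down exactly how the parity coloring bounds the number of odd-gap transitions, and verifying that the halved sequence indeed has all gaps in $G$ (i.e. that each even Fibonacci gap $f_{3i}$ halves to $g_i \in G$ and no spurious non-$G$ gaps appear), will be the technical heart of the lemma.
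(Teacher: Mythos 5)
Your construction is exactly the paper's: a product 4-coloring $\chi'$ that encodes the parity of $n$ together with $\chi(\lceil n/2\rceil)$, i.e. $\chi'(n) = c_{1,\chi((n+1)/2)}$ for $n$ odd and $c_{2,\chi(n/2)}$ for $n$ even. But you have left a hole precisely where no hole exists. Since the color class of $\chi'$ determines the parity of the integer, \emph{any} two terms of the same color have the same parity; hence in a monochromatic $F$-diffsequence $n_1 < \dots < n_k$ every consecutive gap $n_{i+1}-n_i$ is even, so it is an even Fibonacci number $f_{3j}$, and $\frac{n_{i+1}-n_i}{2} = g_j \in G$ by the definition of $G$. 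There are no odd-gap steps at all --- not ``a bounded number'' to be controlled --- so halving ($n \mapsto \frac{n+1}{2}$ in the all-odd case, $n \mapsto \frac{n}{2}$ in the all-even case) turns a $k$-term monochromatic $F$-diffsequence directly into a $k$-term monochromatic $G$-diffsequence for $\chi$, with no loss of length. One takes $k' = k$, and the lemma is done in a few lines; what you defer as ``the technical heart'' is this one-sentence parity observation.

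Worse, the fallback you sketch for the nonexistent obstacle would not work. A diffsequence requires the differences of \emph{consecutive} terms to lie in the prescribed set, and discarding the terms at odd-gap steps merges adjacent gaps into sums $f_a + f_b$, which are in general not Fibonacci numbers (and, when one summand is odd, not even even), so the surviving subsequence is typically not an $F$-diffsequence and its halving not a $G$-diffsequence. So the plan ``take $k'$ a constant multiple of $k$ and keep a fraction of the terms'' is unsound as stated; it is only rescued by the observation above, which makes it unnecessary. One further small point: the paper colors all of $\N$ rather than finite intervals $[N]$ (the hypothesis $\Delta(G,k;2)=\infty$ supplies such a coloring, via compactness if one insists), which removes the interval bookkeeping your contrapositive setup introduces.
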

\begin{proof}
Let $\chi: \N \to \{1,2\}$ be a 2-coloring of $\N$ that does not contain a monochromatic $k$-term $G$-diffsequence. Then define a 4-coloring $\chi': \N \to \{c_{1,1},c_{1,2},c_{2,1},c_{2,2}\}$ by 

$$\chi'(n) = \begin{cases} 
c_{1,\chi(\frac{n+1}{2})} & n \text{ odd}, \\
c_{2,\chi(\frac{n}{2})} & n \text{ even}.
\end{cases}
$$
Now suppose towards contradiction that $\chi'$ contains a $k$-term monochromatic $F$-diffsequence $n_1,\dots, n_k$. By the construction of $\chi'$, each term in the diffsequence has the same parity. Suppose first that $n_1,\dots, n_k$ are all odd. Then $n_{i+1} -n_i $ is even for $1 \le i \le k-1$. Moreover, observe that $\chi(\frac{n_1+1}{2}) = \dots = \chi(\frac{n_k+1}{2})$. Therefore $\frac{n_{i+1}+1}2 - \frac{n_{i}+1}{2} = \frac{n_{i+1} -n_i}2 \in G$ for $1 \le i \le k-1$, so $\frac{n_1+1}{2},\dots, \frac{n_k+1}{2}$ is a $k$-term $G$-diffsequence, a contradiction. If we assume instead that $n_1,\dots, n_k$ are even, then we can reach a contradiction by a similar argument, which completes the proof. 
\end{proof}

The following result gives several bounds on differences of fractional parts, and it is central to the proof of Theorem \ref{Theorem_doa_F_le_3}. 
\begin{lemma}\label{Lemma_master_lemma}
The following bounds hold. 
\begin{enumerate}[(i)]

    \item Suppose $x_2 - x_1 = \frac{g_i}{2}$ with $g_i \in G$ even. If $x_1 = \floor{m_1 \phi}$ and $x_2 = \floor{m_2\phi}$ for some integers $m_1$ and $m_2$, then $$\fracpt{m_2\phi}-\fracpt{m_1\phi} < \frac{ \phi^{-3i+1}-\phi} 4 < -0.38 .$$
    
    \item Suppose $x_2 - x_1 =  \frac{g_i+1}{2}$ with $g_i \in G$ odd. If $x_1 = \floor{m_1 \phi}$ and $x_2+1 = \floor{m_2\phi}$ for some integers $m_1$ and $m_2$, then $$\fracpt{m_2\phi}-\fracpt{m_1\phi} < \frac {3\phi-6} 4 <-0.28.$$
    
    \item Suppose $x_2 - x_1 = \frac{g_i-1}{2}$ with $g_i \in G$ odd and $i > 1$. If $x_1+1 = \floor{m_1 \phi}$ and $x_2 = \floor{m_2\phi}$ for some integers $m_1$ and $m_2$, then $$\fracpt{m_2\phi}-\fracpt{m_1\phi} < \frac {-5\phi +6}{4} < -0.52.$$

\end{enumerate}

\end{lemma}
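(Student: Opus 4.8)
The common engine for all three parts is the identity $\fracpt{m\phi} = m\phi - \floor{m\phi}$. Writing $\delta := m_2 - m_1$ and letting $D$ denote the gap between the two floor-arguments, this identity telescopes the quantity of interest to $\fracpt{m_2\phi} - \fracpt{m_1\phi} = \delta\phi - D$ in every case: here $D = \frac{f_{3i}}{4}$ in (i), $D = (x_2+1)-x_1 = \frac{f_{3i}+6}{4}$ in (ii), and $D = x_2 - (x_1+1) = \frac{f_{3i}-6}{4}$ in (iii), using $g_i = \frac{f_{3i}}{2}$. The problem thus reduces to identifying the integer $\delta$ and evaluating $\delta\phi - D$ exactly.

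To pin down $\delta$, I would use that $\delta\phi - D$ is a difference of fractional parts and hence lies in $(-1,1)$; equivalently $\delta$ is an integer in the open interval $\left(\frac{D-1}{\phi}, \frac{D+1}{\phi}\right)$, whose length $\frac{2}{\phi}$ is less than $2$. I compute the fractional part of $D/\phi$ precisely via Corollary \ref{Cor_divByPhi_all}, which gives $\frac{f_{3i}}{\phi} = f_{3i-1} + (-1)^{3i+1}\phi^{-3i}$, so that $D/\phi$ is a quarter-integer perturbed by an exponentially small term (plus, in (ii) and (iii), a contribution $\pm\frac{6}{\phi} = \pm 6(\phi-1)$ expanded exactly). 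The key arithmetic input is $f_{3i-1} \equiv 1 \pmod 4$, which holds in all relevant cases because $f_n \bmod 4$ is periodic with period $6$ and $3i-1$ falls in the class $5 \pmod 6$ for even $i$ and $2 \pmod 6$ for odd $i$, on both of which $f_n \equiv 1$. Writing $f_{3i-1} = 4t+1$ then shows the fractional part of $D/\phi$ is bounded away from $0$ and $1$, so the admissible interval contains a unique integer: $\delta = \frac{f_{3i-1}-1}{4}$ in (i), $\frac{f_{3i-1}+3}{4}$ in (ii), and $\frac{f_{3i-1}-5}{4}$ in (iii).

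With $\delta$ in hand, I substitute and simplify using Corollary \ref{Cor_divByPhi_all} once more in the rearranged form $f_{3i-1}\phi - f_{3i} = (-1)^i\phi^{-3i+1}$. The difference collapses to the closed forms $\frac{\phi^{-3i+1}-\phi}{4}$ in (i), $\frac{3\phi-6}{4} - \frac{\phi^{-3i+1}}{4}$ in (ii), and $\frac{-5\phi+6}{4} - \frac{\phi^{-3i+1}}{4}$ in (iii); in fact (i) yields the exact equality $\fracpt{m_2\phi}-\fracpt{m_1\phi} = \frac{\phi^{-3i+1}-\phi}{4}$. Since $\phi^{-3i+1}>0$, the values in (ii) and (iii) lie strictly below their leading terms $\frac{3\phi-6}{4}$ and $\frac{-5\phi+6}{4}$. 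The uniform numerical estimates then follow from monotonicity in $i$: in (i) the expression decreases in $i$, so its maximum over even $i\ge 2$ is attained at $i=2$, where it equals $\phi-2<-0.38$; in (ii) and (iii) the expressions increase to their leading terms as $i\to\infty$, and $\frac{3\phi-6}{4}<-0.28$ while $\frac{-5\phi+6}{4}<-0.52$.

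The main obstacle is the determination of $\delta$ in the second step. Everything hinges on the residue fact $f_{3i-1}\equiv 1\pmod 4$ together with a careful check that the exponentially small error from Corollary \ref{Cor_divByPhi_all} rounds $D/\phi$ to the claimed integer — upward in (ii) and downward in (iii) — which requires expanding the $\pm 6/\phi$ shifts as $\pm 6(\phi-1)$ and confirming that the fractional part stays in the correct subinterval for every admissible $i$. Once $\delta$ is fixed, the algebraic simplification and the numerical bounds are routine.
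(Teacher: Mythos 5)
Your proposal is correct and takes essentially the same route as the paper's proof: both rearrange via Corollary \ref{Cor_divByPhi_all}, use $f_{3i-1}\equiv 1 \pmod 4$ to pin down the integer $m_2-m_1$ (your values $\frac{f_{3i-1}-1}{4}$, $\frac{f_{3i-1}+3}{4}$, $\frac{f_{3i-1}-5}{4}$ agree with the paper's implicit ones), and both then arrive at the identical closed forms — including the exact equality in (i) — and the same numerical bounds. One small repair to your uniqueness step: since the admissible interval has length $2/\phi>1$, it is not enough that $\fracpt{D/\phi}$ is bounded away from $0$ and $1$ — you need it to avoid the middle window $\left(\phi^{-2},\phi^{-1}\right)\approx(0.382,0.618)$, which does hold here ($\fracpt{D/\phi}$ lies near $0.24$, in $(0.17,0.24)$, and near $0.33$ in the three cases, respectively); the paper handles the same point by discarding the second candidate integer because it would force $\left|\fracpt{m_2\phi}-\fracpt{m_1\phi}\right|>1$.
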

\newcommand{\fmTwo}{\fracpt{m_2\phi}}
\newcommand{\fmOne}{\fracpt{m_1\phi}}

\begin{proof}
For $(i)$, we have $$ m_2\phi - m_1\phi -\fmTwo + \fmOne = x_2 - x_1 = \frac{g_i}{2} = \frac{f_{3i}}{4},$$ and after rearranging and applying Corollary \ref{Cor_divByPhi_all}, we have $$m_2 -m_1 = \frac{f_{3i}}{4\phi} +\frac{\fmTwo - \fmOne}{\phi} = \frac{f_{3i-1}}{4}-\frac{\phi^{-3i}}{4} +\frac{\fmTwo - \fmOne}{\phi}.$$ 

Since $g_i$ is even, it follows from the definition of $g_i$ that $i$ is even, so $f_{3i-1} \equiv 1 \pmod 4$. Observe also that $\left|\frac{\fmTwo - \fmOne}{\phi}\right| < 1$. Because $m_2 - m_1$ is an integer, it follows that either $\fmTwo - \fmOne = \phi(\frac{\phi^{-3i}-1}4)$ or $\fmTwo-\fmOne = \phi(1-\frac 14 + \frac{\phi^{-3i}}4)$. However, the latter case is impossible since we must have $|\fmTwo - \fmOne| < 1$, but $\phi(1-\frac 14 + \frac{\phi^{-3i}}4) >1$ for all $i$. The minimum value of $i$ is 2, so we have $$\fmTwo-\fmOne = \frac{\phi^{-3i+1}-\phi}{4} \le  \frac{\phi^{-5}-\phi}{4} < -0.38$$ for all $i$. 

The proofs of the remaining parts are similar. For $(ii)$, since $g_i$ is odd, $g_i = \frac{f_{3i}}2$ with $i$ odd. Then we have 

$$m_2\phi - m_1\phi -\fmTwo + \fmOne = x_2 + 1 - x_1 = \frac{g_i+3}{2} = \frac{f_{3i}}{4} + \frac 32 .$$

Rearranging and applying Corollary \ref{Cor_divByPhi_all} gives 

$$m_2 - m_1 = \frac{f_{3i-1}}{4}+\frac{\phi^{-3i}}{4}+\frac{3}{2\phi}+\frac{\fmTwo - \fmOne}{\phi}.$$ 

Here, note that $f_{3i-1} \equiv 1 \pmod 4$ and $1<\frac{1}4 + \frac{3}{2\phi}<2$. By a similar argument as above, it follows that either $\fmTwo - \fmOne = -\phi \left(\frac{3}{2\phi} - \frac 34 + \frac{\phi^{-3i}}{4}\right) = \frac{3\phi-6}{4} -\frac{\phi^{-3i+1}}{4} $ or
$\fmTwo - \fmOne = \phi \left(1 -\left(\frac{3}{2\phi} - \frac 34 + \frac{\phi^{-3i}}{4}\right)\right) = \frac{7\phi-6}{4} -\frac{\phi^{-3i+1}}{4}$. Since $i \ge 1$, it follows that $\frac{7\phi-6}{4} -\frac{\phi^{-3i+1}}{4} >1$ and the latter case is impossible. Therefore $\fmTwo-\fmOne =\frac{3\phi-6}{4} -\frac{\phi^{-3i+1}}{4} < \frac{3\phi-6}{4} < -0.28$ for all $i$. 

For $(iii)$, we again have $g_i = \frac{f_{3i}}2$ with $i$ odd. Then we have 

$$m_2\phi - m_1\phi -\fmTwo + \fmOne = x_2 - (x_1+1) = \frac{g_i-3}{2} = \frac{f_{3i}}{4} - \frac 32. $$

Rearranging and applying Corollary \ref{Cor_divByPhi_all} gives 

$$m_2 - m_1 = \frac{f_{3i-1}}{4}+\frac{\phi^{-3i}}{4}-\frac{3}{2\phi}+\frac{\fmTwo - \fmOne}{\phi}.$$ 
Here we have $-1< \frac 14 - \frac{3}{2\phi} < 0$, so it follows that either $\fmTwo - \fmOne = -\phi \left( \frac 14 -\frac{3}{2\phi}+ \frac{\phi^{-3i}}{4}\right) = \frac{6-\phi}{4} - \frac{\phi^{-3i+1}}{4} > 1$ since $i > 1$, or  $\fmTwo - \fmOne = \phi \left(-1 -\left( \frac 14 -\frac{3}{2\phi}+ \frac{\phi^{-3i}}{4}\right)\right)  = \frac{6 -5\phi}{4} -\frac{\phi^{-3i+1}}{4}$. The former case is impossible, so $\fmTwo - \fmOne < \frac{6 -5\phi}{4}<-0.52$ for all $i$.   
\end{proof}

We are now equipped to prove Theorem \ref{Theorem_doa_F_le_3}. 

\begin{proof}[Proof of Theorem \ref{Theorem_doa_F_le_3}]
By Lemma \ref{Lemma_2Coloring_implies_4Coloring}, it is sufficient to find a 2-coloring of $\N$ that contains no monochromatic 4-term $G$-diffsequences. We will show that the coloring $\chi(n) = S(n),$ where $S(n)$ is the $n$-th symbol in the word $S$, satisfies this property. 

Throughout this proof, we assume that $y_1, y_2,y_3,y_4$ is a 4-term $G$-diffsequence, and we will show a contradiction. First, consider the following finite state machine. 
\begin{center}
\includegraphics[scale=0.35]{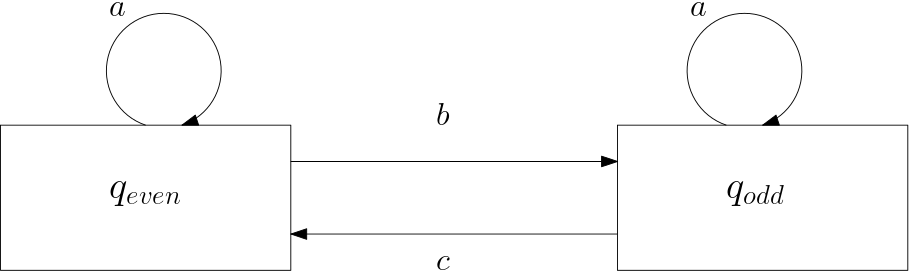}
\end{center}
Suppose $S(y_1) = S(y_2) = S(y_3) = S(y_4)$ with $y_{i+1}-y_i = g_{j_i}$ for some $g_{j_i} \in G$ and $i = 1,2,3$. For each diffsequence, we have a sequence of states $q_1,q_2,q_3,q_4$, and a sequence of transitions $t_1,t_2,t_3$.  If $y_i$ is even, then we set $q_i := q_{even}$, and if $y_i$ is odd, we set $q_i := q_{odd}$. The transitions $t_i$ are determined by the transition arrow that takes $q_i$ to $q_{i+1}$, so that, for example, if $q_1 = q_{even}$ and $q_2 = q_{odd}$, then $t_1 = b$. For each $y_i$, set $$x_i := \begin{cases} 
\frac{y_i}2 & \text{if }y_i \text{ is even }, \\
\frac{y_i+1}2 & \text{if }y_i \text{ is odd }.
\end{cases}$$  
By Lemma \ref{Lemma_fibWord_floor_phi}, for each $i$ there is a unique integer $m_i$ that satisfies $$ x_i = \begin{cases}
\floor{m_i \phi} &\text{ if }y_i \text{ is even and }S(y_i) = 0,\\
\floor{m_i \phi}-1 &\text{ if }y_i \text{ is odd and }S(y_i) = 0,\\
\floor{m_i \phi}+1 &\text{ if }y_i \text{ is even and }S(y_i) = 1,\\
\floor{m_i \phi} &\text{ if }y_i \text{ is odd and }S(y_i) = 1.\\
\end{cases} $$

We are done if we show that $\fracpt{m_4\phi} -\fracpt{m_1\phi} < -1$, which is a contradiction. By Lemma \ref{Lemma_master_lemma}, for all $i$ we have that 

$$ \fracpt{m_{i+1}\phi} - \fracpt{m_{i}\phi} <
\begin{cases}
-0.38 & \text{if } t_i = a, \\
-0.28 & \text{if } t_i = b, \\
-0.52 & \text{if } t_i = c. \\
\end{cases}$$
By examining all the combinations of values for $t_1, t_2,t_3$, we see that $\fracpt{m_4\phi} - \fracpt{m_1\phi} <-1$ unless $t_1 = t_2 = t_3 = b$. But this case is impossible since consecutive transitions cannot both be $b$, and the proof is complete.   

% Now suppose $S(y_1) = S(y_2) = S(y_3) = S(y_4) = 1$. We define $q_i$ and $x_i$ identically as in the previous case, but instead we let $m_i$ be the unique integer satisfying 
% $$ x_i = \begin{cases}
% \floor{m_i \phi}+1 &\text{ if }y_i \text{ is even},\\
% \floor{m_i \phi} &\text{ if }y_i \text{ is odd}.
% \end{cases} $$
\end{proof}

% \begin{proof}
% Throughout this proof, we will suppose $x_1, x_2, x_3, x_4$ is a 4-term $G$-diffsequence, and then show a contradiction. Let $g_1 = x_2 - x_1$, $g_2 = x_3 -x_2$, and $g_3 = x_4-x_3$. 

% Case 1 : Suppose $S(x_1) = 0$ and $x_1$ is even. Then there exist $m_i$ such that $x_i = \lfloor m_i \phi \rfloor$ for $i=1,2,3,4$. Moreover, observe that for $i = 1,2,3$ there exists an $n$ such that $g_i$ = $\frac{F(6n-1)-1}4$ by [LEMMA]. From [LEMMA] it follows that $\{g_i\} \le \{\phi\} .$ It follows from LEMMA that $$1-\{\phi\} \le \{m_2\phi\} \le \{\phi\}.$$ 
% Therefore $\{m_3\phi\} < \{2\{\phi\}\} \le 1- \{\phi\}$, but $\{m_3\phi\} > 1-\{\phi\}$ by [LEMMA], a contradiction. 
% \end{proof}
\subsection{Proof of Theorem \ref{Theorem_dor_AP_F}}
The proof of Theorem \ref{Theorem_dor_AP_F} is similar to the proof of Theorem \ref{Theorem_doa_F_le_3}. We will show that the 2-coloring induced by $T = \nu(F_\infty)$ has no monochromatic 5-term arithmetic progressions whose gaps are in $F$. The following lemma is a technical result which is essential for the computations in the proof of Theorem \ref{Theorem_dor_AP_F}. 

% From the definition of $\phi$, it is clear that the $k$-th 1 in $T$ comes from replacing the $k$-th 0 in $F_\infty$ by a 1. We have $F_\infty(1) = 0$ and $T(1) = 1 = 2 \floor{\phi} - 1$. Suppose that $T(i) = 1$ if and only if $ i = 2\floor{n\phi} - n$ for $1 \le i \le k$. Let $j>k$ be the position of the next 1 in $T$.

% ***

\begin{lemma}\label{Lemma_Thm2_technical}
Let $f_n$ be the $n$-th Fibonacci number, and suppose $\epsilon \in \{-4,0,4\}$ and $n \ge 13$. Then the following identities hold: 

\begin{enumerate}[(i)]
\item
$$\fracpt{\frac{f_n+\epsilon}{2\phi-1}} =  \frac{2(-\phi)^{-n}}{5} + c_{n,\epsilon},$$ where
$$ c_{n,\epsilon} = 
\begin{cases}
\frac{4+(2\sqrt5-5)\epsilon}{10} & \text{if }   n\equiv 0 \pmod 4, \\
\frac{4+(4\sqrt 5-5)\epsilon}{20} & \text{if }n \equiv 1 \pmod 4  \text{ and } \epsilon \in \{0,4\}, \\
\frac{11-4\sqrt{5}}{5} & \text{if }  n \equiv 1 \pmod 4\text{ and } \epsilon = -4, \\
\frac{6+(2\sqrt{5}-5)\epsilon}{10} & \text{if } n \equiv 2 \pmod 4, \\
\frac{8 + (2\sqrt 5 - 5) \epsilon}{10} & \text{if } n \equiv 3 \pmod 4 \text{ and } \epsilon \in \{0,4\}, \\
\frac{9-4\sqrt{5}}{5} & \text{if }n\equiv 3 \pmod 4  \text{ and }   \epsilon = -4. 
\end{cases}$$
\item $\floor{\frac{f_n+\epsilon}{2\phi-1}}$ is even if and only if one of the following cases holds: 

\begin{itemize} \item$\epsilon = 0$ and $n \equiv 0,1,2,3,5,10 \pmod {12}$,

\item $\epsilon = 4$ and $n \equiv 0,2,3,9,10 \pmod {12}$, 
\item $\epsilon = -4$ and $n \equiv 0,1,2,5,7,10,11 \pmod {12}.$
\end{itemize} 
\end{enumerate}

\end{lemma}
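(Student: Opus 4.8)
The plan is to reduce everything to the exact closed forms of Lemma \ref{Lemma_fn_closed_form}. Since $2\phi-1=\sqrt5$, and since the two closed forms give $\sqrt5\,f_n=\phi^n-(-\phi)^{-n}$ and $\ell_n=\phi^n+(-\phi)^{-n}$, subtracting yields $\sqrt5\,f_n=\ell_n-2(-\phi)^{-n}$. Dividing by $5$ and adding $\frac{\epsilon}{\sqrt5}=\frac{\epsilon\sqrt5}{5}$ gives the exact identity
\[
\frac{f_n+\epsilon}{2\phi-1}=\frac{\ell_n}{5}+\frac{\epsilon\sqrt5}{5}-\frac{2(-\phi)^{-n}}{5}.
\]
This splits the quantity into three pieces: a rational main term $\ell_n/5$, a fixed irrational shift $\epsilon\sqrt5/5$ depending only on $\epsilon$, and an error term of absolute value at most $\tfrac25\phi^{-13}$ for $n\ge13$. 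Everything then reduces to reading off the fractional part and floor of this expression.

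For part (i) I take fractional parts. The fractional part of $\ell_n/5$ is $\tfrac15(\ell_n\bmod5)$, and the Lucas numbers are periodic mod $5$ with period $4$, namely $\ell_n\equiv2,1,3,4$ for $n\equiv0,1,2,3\pmod4$. For $\epsilon=0$ the error term is far too small (as $n\ge13$) to move the value across an integer, since $\tfrac15(\ell_n\bmod5)\ge\tfrac15\gg\tfrac25\phi^{-13}$; this yields $c_{n,0}=\tfrac15(\ell_n\bmod5)$, matching the $\epsilon=0$ rows. For $\epsilon=\pm4$ I add the fixed irrational $\fracpt{\epsilon\sqrt5/5}$---equal to $\tfrac15(4\sqrt5-5)$ when $\epsilon=4$ and $\tfrac15(10-4\sqrt5)$ when $\epsilon=-4$---to the period-$4$ residue and reduce modulo $1$. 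The six cases in the statement are exactly the residue classes of $n\bmod4$ together with the integer shift needed to land in $[0,1)$; in particular the two exceptional rows ($\epsilon=-4$ with $n\equiv1,3\pmod4$) are precisely the cases where the naive linear-in-$\epsilon$ expression leaves $[0,1)$ and must be corrected by $\pm1$, which accounts for their different closed form.

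For part (ii) I combine part (i) with the displayed identity: since $\floor{x}=x-\fracpt{x}$, subtracting the known fractional part cancels the irrational and error pieces and leaves $\floor{\frac{f_n+\epsilon}{2\phi-1}}$ as an explicit integer whose parity I must determine. Two ingredients control it. First, the parity of $\floor{\ell_n/5}$ is governed by $\ell_n\bmod10$, and the Lucas numbers are periodic mod $10$ with period $12$; tabulating one full period shows $\floor{\ell_n/5}$ is even exactly for $n\equiv0,1,2,3,5,10\pmod{12}$, which is the $\epsilon=0$ list. Second, adding $\epsilon/\sqrt5\approx\pm1.789$ shifts the integer part by a fixed amount (an odd integer for $\epsilon=4$, an even integer for $\epsilon=-4$) plus an overflow indicator that, by the case analysis of part (i), depends only on $n\bmod4$ through whether $\ell_n\bmod5$ is large enough. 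Combining the period-$4$ overflow parity with the period-$12$ parity of $\floor{\ell_n/5}$ produces a pattern of period $\mathrm{lcm}(4,12)=12$, and working through the twelve residues gives exactly the stated classes for each $\epsilon$.

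The main obstacle is bookkeeping rather than any single hard idea. In part (i) the delicate step is pinning down, for each pair $(\epsilon,n\bmod4)$, the correct integer translate bringing $\tfrac15(\ell_n\bmod5)+\fracpt{\epsilon\sqrt5/5}$ into $[0,1)$, while simultaneously verifying that the error term $\tfrac25(-\phi)^{-n}$ never tips the value across an integer boundary---this is the sole place the hypothesis $n\ge13$ is used, and it is what forces the separate exceptional formulas. In part (ii) the subtlety is synchronizing the two periods (overflow has period $4$, whereas $\ell_n\bmod10$ has period $12$) and again checking the error term cannot change the floor; once the overflow condition is expressed through $\ell_n\bmod5$, the final mod-$12$ tables follow mechanically.
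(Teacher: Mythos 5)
Your proposal is correct and takes essentially the same route as the paper's own proof: the same decomposition $\frac{f_n+\epsilon}{2\phi-1} = \frac{\ell_n}{5} - \frac{2(-\phi)^{-n}}{5} + \frac{\epsilon}{\sqrt{5}}$ from Lemma \ref{Lemma_fn_closed_form}, periodicity of $\ell_n$ modulo $5$ (period $4$) for part (i) and modulo $10$ (period $12$) for part (ii), the bound $\left|\frac{2(-\phi)^{-n}}{5}\right| < 0.001$ for $n \ge 13$ to rule out crossing an integer boundary, and a finite case check over the residues. Incidentally, your sign $-\frac{2(-\phi)^{-n}}{5}$ matches the identity displayed in the paper's proof, so the $+$ sign in the lemma statement itself appears to be a typo on a term that is in any case negligible downstream.
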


\begin{proof}
By Lemma \ref{Lemma_fn_closed_form}, we have 

\begin{equation}\label{Eqn_f_Lucas_identity}\frac{f_n+ \epsilon}{2\phi-1} = \frac{f_n+\epsilon}{\sqrt{5}} = \frac{\phi^n - (-\phi)^{-n}}{5}+ \frac{\epsilon}{\sqrt{5}} = \frac{\ell_n}{5}-\frac{2(-\phi)^{-n}}{5}+ \frac{\epsilon}{\sqrt{5}}.
\end{equation}

Note that the Lucas numbers $\ell_n$ are periodic modulo 5 with period 4, so $\frac{\ell_n}{5} = m + \frac{r}{5}$ for some $m \in \Z$ and $r \in \{1,2,3,4\}$, with $r$ depending only on the value of $n$ modulo 4. Therefore, if $n$ is sufficiently large, then $\fracpt{\frac{f_n+\epsilon}{2\phi - 1}} = -\frac{2(-\phi)^{-n}}{5} + \fracpt{\frac{r}{5} + \frac{\epsilon}{\sqrt{5}}}$. Moreover, for $n \ge 13$, we have $\left|\frac{2(-\phi)^{-n}}{5}\right| < .001$, and some straightforward calculations give part $(i)$.

For part $(ii)$, we take the floor of both sides of Equation \ref{Eqn_f_Lucas_identity}. If we write $\ell_n = 10m +r$ for $m \in \Z$ with $0 \le r \le 9$, we observe that

$$\floor{\frac{f_n+\epsilon}{2\phi-1}} = \floor{\frac{10m+r}{5} -\frac{2(-\phi)^{-n}}{5}+\frac{\epsilon}{\sqrt{5}}} = 2k + \floor{\frac{r}5-\frac{2(-\phi)^{-n}}{5}+\frac{\epsilon}{\sqrt{5}}},$$
and we see that the parity of $\floor{\frac{f_n+\epsilon}{2\phi-1}}$ is dependent only on $\epsilon$ and the value of $r$. (The term $\frac{2(-\phi)^{-n}}{5}$ is negligible since $n \ge 13$ and the Lucas numbers are nonzero modulo 5.) The Lucas numbers are periodic modulo 10 with period 12, and a straightforward check of all the possible values of $\epsilon$ and $n \pmod {12}$ gives the result. 
\end{proof}

We now prove Theorem \ref{Theorem_dor_AP_F}. 
\begin{proof}[Proof of Theorem \ref{Theorem_dor_AP_F}]
Suppose towards contradiction that $T$ contains a 5-term arithmetic progression $x_1,x_2,x_3,x_4,x_5$ with common difference $f_n\in F$ and $T(x_1) = \dots = T(x_5)$. We first consider the case where $T(x_i) = 1$ for all $i$. By Lemma \ref{Lemma_fibWord_floor_phi}, for all $i$ there exists a positive integer $m_i$ such that $x_i = 2\floor{m_i \phi}-m_i$. 

Therefore for $i = 1,2,3,4$ we have 

$$m_{i+1}-m_i = 2(\floor{m_{i+1}\phi}-\floor{m_i\phi})+x_i-x_{i+1} = 2(\floor{m_{i+1}\phi}-\floor{m_i\phi})-f_n,$$ so $m_{i+1} -m_i+f$ is even, hence $m_{i+1}-m_i$ and $f_n$ have the same parity. After simplifying further, we obtain
$$m_{i+1}-m_i = 2 (m_{i+1}\phi - \fracpt{m_{i+1}\phi} -m_i\phi +\fracpt{m_i \phi})-f_n,$$ which implies 

$$m_{i+1}-m_i = \frac{f_n}{2\phi-1} +\frac{2}{2\phi-1}(\fracpt{m_{i+1}\phi} -\fracpt{m_i\phi}).$$

Since $\left|\frac{2}{2\phi-1}(\fracpt{m_{i+1}\phi} -\fracpt{m_i\phi})\right| <1,$ it follows that either $m_{i+1}-m_i = \lfloor \frac{f_n}{2\phi-1} \rfloor$ or
$m_{i+1}-m_i = \lceil\frac{f_n}{2\phi-1} \rceil$. By the parity argument above, $m_{i+1}-m_i$ is equal to the value in $\{\lfloor \frac{f_n}{2\phi-1} \rfloor ,\lceil\frac{f_n}{2\phi-1} \rceil\}$ that has the same parity as $f_n$. Therefore we have 

\begin{equation}\label{Eqn_fibAP_frac_diffs}
\{m_{i+1}\phi\} - \{ m_i \phi\} = \begin{cases} 
\frac{2\phi-1}2 \left(1-\fracpt{\frac {f_n}{2\phi-1}}\right) & \text{if } m_{i+1}-m_i = \lceil\frac{f_n}{2\phi-1} \rceil,
\\
\frac{2\phi-1}2 \left(-\fracpt{\frac {f_n}{2\phi-1}}\right) & \text{if } m_{i+1}-m_i = \lfloor\frac{f_n}{2\phi-1} \rfloor.
\end{cases}
\end{equation}

 The Fibonacci number $f_n$ is even if and only if $n$ is a multiple of 3, and so by using Equation \ref{Eqn_fibAP_frac_diffs} and Lemma \ref{Lemma_Thm2_technical}, we can now calculate the differences $\{m_{i+1}\phi\} - \{m_i\phi\}$. Note that these differences are dependent only on $f_n$, so they are equal for all $i$. If the absolute value of these differences is at least $\frac 14$, then there are no 5-term arithmetic progressions with $T(x_i) = 1$ for all $i$. We give the values of $\{m_{i+1}\phi\} - \{m_i\phi\}$, rounded to three decimal places, of $f_n$ below for $1\le n \le 12$: 
\begin{center}

\begin{tabular}{c|c|c}
     $n$ & $f_n$ & $\{m_{i+1}\phi\} - \{m_i\phi\}$\\
     \hline
    1,2 & 1& .618 \\
    3 & 2 & -1\\
    4 & 3 & -.382 \\
    5 & 5 & .854 \\
    6 &  8 & .472 \\
    7 &  13 & -.910 \\
    8 &  21 & -.438 \\
    9 & 34 & .889 \\
     10 &55 & .451 \\
     11 &89 & -.897 \\
     12 &144 & -.446 \\
     
\end{tabular}
\end{center}
For $n \ge 13$, using Equation \ref{Eqn_fibAP_frac_diffs} and Lemma \ref{Lemma_Thm2_technical}, we see that $\{m_{i+1}\phi\} - \{m_i\phi\}$ is approximately $\frac {-1}{\sqrt 5}$, $\frac 2{\sqrt 5}$, $\frac {1}{\sqrt 5}$, or $\frac {-2}{\sqrt 5}$ when $n$ is congruent to 0, 1, 2, or 3 modulo 4, respectively. We see that $| \{m_{i+1}\phi\} - \{m_i\phi\}| > \frac 13$ for all $n$, and so in fact there are not even any 4-term arithmetic progressions $x_1,x_2,x_3,x_4$ with gaps in $F$ with $T(x_i) = 1$ for $i=1,2,3,4$.

We now move to the case $T(x_i) = 0$. First, observe that the string 11 never appears in the word $F_\infty$, hence the string 000 never appears in $T$. Moreover, each 0 in $T$ is adjacent to another 0. Consequently, if $T(x) = 0$, then either $T(x-2) = 1$ or $T(x+2) = 1$. For each $x_i$, choose $y_i \in \{x_i-2,x_i+2\}$ such that $T(y_i) = 1$.  Therefore, if $x_{i+1}-x_i = f_n$ for all $i$, then  $y_{i+1}-y_i \in \{f_n-4,f_n,f_n+4\}$ for all $i$. By Lemma \ref{Lemma_fibWord_floor_phi}, for all $i$ there exists an $m_i$ such that $y_i = 2\floor{m_i \phi}-m_i$. 

Our analysis is now similar as above. Notice again that $m_{i+1}-m_i$ and $f_n$ must have the same parity since 

$$m_{i+1}-m_i = 2(\floor{m_{i+1}\phi}-\floor{m_i\phi})+y_i-y_{i+1} = 2(\floor{m_{i+1}\phi}-\floor{m_i\phi})-(f_n+\epsilon_i),$$ and $\epsilon_i \in \{-4,0,4\}$. 

Rearranging and using a similar argument as above, we have $m_{i+1}-m_i = \lfloor \frac{f+\epsilon_i}{2\phi-1} \rfloor$ or
$m_{i+1}-m_i = \lceil\frac{f+\epsilon_i}{2\phi-1} \rceil$, where $\epsilon_i \in \{-4,0,4\}$. Therefore we have 

\begin{equation}\label{Eqn_fibAP_frac_diffs_2}
\{m_{i+1}\phi\} - \{ m_i \phi\} = \begin{cases} 
\frac{2\phi-1}2 \left(1-\fracpt{\frac {f+\epsilon_i}{2\phi-1}}\right) & \text{if } m_{i+1}-m_i = \lceil\frac{f+\epsilon_i}{2\phi-1} \rceil,
\\
\frac{2\phi-1}2 \left(-\fracpt{\frac {f+\epsilon_i}{2\phi-1}}\right) & \text{if } m_{i+1}-m_i = \lfloor\frac{f+\epsilon_i}{2\phi-1} \rfloor.
\end{cases}
\end{equation}

We again calculate the first several values of $\{m_{i+1}\phi\}-\{m_i \phi\}$ when $\epsilon = \pm 4$ below.

\begin{center}
\begin{tabular}{c|c|c}
     $n$ &$f_n+4$& $\{m_{i+1}\phi\}-\{m_i \phi\}$  \\
     \hline
     1,2 &5 & .854 \\
     3 &6 & -.764 \\
     4 & 7 & -.146 \\
     5 & 9 & 1.090 \\
     6 & 12 & .708\\
     7 & 17 & -.674\\
     8 & 25 & -.202 \\
     9 & 38 & -1.111 \\
     10 & 59 & .687 \\
     11 & 93 & -.661 \\
     12 & 148 & -.210 \\
\end{tabular}
\hspace{40pt} 
\begin{tabular}{c|c|c}
    $n$ &  $f_n-4$& $\{m_{i+1}\phi\}-\{m_i \phi\}$ \\
     \hline
 1,2 & -3& .382\\
 3 & -2& 1\\
4 & -1& -.618\\
5 & 1& .618\\
6 & 4& .236\\
 7 & 9& 1.090\\
 8 & 17& -.674\\
9 & 30& .652\\
10 & 51& .215\\
11 & 85& 1.103 \\
 12 & 140& -.682\\
\end{tabular}
\end{center}

 If $\epsilon_i = 4$, then by Lemma \ref{Lemma_Thm2_technical}, for $n\ge 13$, the values of $\{m_{i+1}\phi\}-\{m_i \phi\}$ are approximately (within .001 of) one of the values $$ \frac{2(2-\sqrt5)}{\sqrt{5}}\approx -.211,\frac{2(1-\sqrt 5)}{\sqrt{5}} \approx -1.106,\frac{2(3-\sqrt5)}{\sqrt{5}} \approx .683,\frac{3-2\sqrt 5}{\sqrt 5}\approx -.658,$$ depending on whether $n \equiv 0,1,2$, or $3$ modulo 4, respectively. Similarly, if $\epsilon_i = -4$, then the values of  $\{m_{i+1}\phi\}-\{m_i \phi\}$ are approximately (within .001 of) one of the values $$\frac{-2(3-\sqrt5)}{\sqrt{5}} \approx -.683,\frac{-3+2\sqrt 5}{\sqrt 5}\approx .658,\frac{-2(2-\sqrt5)}{\sqrt{5}}\approx .211, \frac{-2(1-\sqrt 5)}{\sqrt{5}} \approx 1.106,$$ again depending on whether $n \equiv 0,1,2$, or $3$ modulo 4, respectively. 
% Observe that if $f_n = 2$, then from the facts that every 0 in $T$ is adjacent to another 0 and the string 000 never appears in $T$, then we cannot have $T(x_1) = T(x_2) = T(x_3) = 0$. Therefore from here on we assume $f_n \neq 2$. 

Let $d_i:=\{m_{i+1}\phi\}-\{m_i\phi\}$. If $|d_i|\ge 1$ for any $i$, then we are done immediately. First, one can show that $d_i >0$ if and only if $\epsilon_i = 0$ and $n \equiv 1,2 \pmod 4$, or $\epsilon_i = 4$ and $n \equiv 2 \pmod 4$ or $n = 5$, or $\epsilon_i = -4$ and $n \not \equiv 0 \pmod 4$. 

Note also that $|d_i| \ge 1$ when  $\epsilon_i = 4$ and $n \equiv 1 \pmod 4$ and when $\epsilon_i = -4$ and $n \equiv 3 \pmod 4$. Therefore these two cases are impossible, and considering the remaining possibilities of $n \pmod 4$ and $\epsilon_i$, we see that $d_i$ always has the same sign regardless of $\epsilon_i$.

Observe that if $\epsilon_i = \pm 4$, then $\epsilon_{i+1} = 0$ or $\epsilon_{i+1} = \mp 4$. Using the approximations for $d_i$ for large $n$ and considering all possible sequences of $\epsilon_i$ for $i = 1,2,3,4$, we have $|d_1+d_2+d_3+d_4|\ge 1$ in all cases (in fact, $|d_1+d_2+d_3|\ge 1$ unless $n = 4$), which concludes the proof. 

\end{proof}

\section{Experimental Results and Further Questions}\label{Section_Experiments}

In this section we give some results on $\Delta(D,k;r)$ and $n(AP_D,k;r)$ for different sets $D$ and discuss some open questions. Our primary method for computing these values is with the SAT solver {\scshape{CaDiCaL}} \cite{BiereFazekasFleuryHeisinger-SAT-Competition-2020-solvers}. SAT solvers have been used successfully in arithmetic Ramsey theory to compute new values for van der Waerden numbers as well as the related Schur and Rado numbers \cite{VDW26,VDW34,BMRS_3ColorSchur,SchurFive,WJW_Rado_ISSAC}. 

\subsection{SAT solving}
 We first recall some standard SAT terminology, which can be found in, for example, \cite{SATHandbook_1stEd}. A \emph{literal} is a Boolean variable $v_i$ or its negation, $\bar{v}_i$. A \emph{clause} is a logical disjunction of literals. The formulas we use are in \emph{conjunctive normal form} (CNF), meaning they are conjunctions of clauses.

For each number $\Delta(D,k;r)$ (or $n(AP_D,k;r)$, we construct formulas $\phi_{n}$ that are satisfiable if and only if $\Delta(D,k;r) > n$ (or $n(AP_D,k;r)>n)$. The set of variables in each $\phi_n$ is $\{v_i^c : 1\le i\le n,\ 1\le c \le r\}$. The variable $v_i^c$ is assigned true if and only if integer $i$ is colored color $c$. The clauses we use in $\phi_n$ have three types, which, following \cite{SchurFive}, we call \emph{positive, negative}, and \emph{optional}.

Positive clauses ensure each integer $i$ is assigned at least one color, and are of the form 

$$ v_i^1 \vee v_i^2 \vee \dots \vee v_i^r$$ for $1\le i \le n$.

Negative clauses ensure that there are no monochromatic $D$-diffsequences (or arithmetic progressions with common difference in $D$). If $x_1,\dots, x_k$ is a $k$-term $D$-diffsequence (or arithmetic progression with common difference in $D$), then we include a clause of the form 

$$\bar{v}_{x_1}^c \vee \bar{v}_{x_2}^c \vee \dots \vee \bar{v}_{x_k}^c$$ for all colors $c$ and all $k$-term diffsequences (or arithmetic progressiona with common difference in $D$) with $1\le x_1 \le \dots \le x_k \le n$. 

Optional clauses ensure that each integer $i$ is assigned at most one color, and are of the form 

$$\bar{v}_{i}^c \vee \bar{v}_i^{c'}$$ for $1\le i \le n$ and $1\le c<c'\le r$. 

The formula $\phi_n$ is the conjunction of all positive, negative, and optional clauses for the given parameters $n,D,k,r$.

\subsection{Experimental results}
We first consider the set of Lucas numbers $L = \{2,1,3,4,7,11,\dots\}$ and $doa(L)$. It is easily shown that no Lucas number is a multiple of 5, so $\Delta(L,2;5) = \infty$ since coloring each integer its congruence class mod 5 avoids 2-term $L$-diffsequences. The following result gives a slight improvement and shows $doa(L)\le 3$.  

\begin{lemma}
Let $L$ be the set of Lucas numbers. Then $\Delta(L,3;4) = \infty$. In particular, $L$ is not 4-accessible and $doa(L)\le 3$. 
\end{lemma}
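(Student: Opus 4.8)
The plan is to exhibit an explicit 4-coloring of $\N$ that avoids monochromatic 3-term $L$-diffsequences, thereby showing $\Delta(L,3;4) = \infty$. Since the statement claims a specific value (rather than just an asymptotic bound), I expect the natural approach is to find a \emph{periodic} coloring: choose a period $p$ and a coloring $\chi$ of the residues modulo $p$ into $4$ colors, so that no three integers $x_1 < x_2 < x_3$ of the same color have both consecutive gaps $x_2-x_1$ and $x_3-x_2$ lying in $L$. A periodic coloring is convenient because avoiding $3$-term $L$-diffsequences becomes a finite condition: one only needs to check that for every residue $a \bmod p$ and every pair of Lucas numbers $\ell, \ell'$, the triple $(a, a+\ell, a+\ell')$ is not monochromatic, and since $L$ modulo $p$ takes only finitely many values, this reduces to a finite check.

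First I would compute the set of residues $\{\ell \bmod p : \ell \in L\}$ for a candidate period $p$. The Lucas numbers are eventually periodic modulo any $p$ (the Pisano-type period), so $L \bmod p$ is a fixed finite set $R_p$. The avoidance condition then reads: for the coloring $\chi$ to work, there must be no residue $a$ and no $r, r' \in R_p$ with $\chi(a) = \chi(a+r) = \chi(a+r')$ where these represent an actual increasing $3$-term $L$-diffsequence. I would search (the paper says via the SAT solver {\scshape CaDiCaL}) for a small $p$ and an explicit assignment of the four colors to $\{0,1,\dots,p-1\}$ satisfying this. The remark preceding the lemma already isolates the key arithmetic fact that Lucas numbers avoid multiples of $5$; I expect the winning construction to exploit the behavior of $L$ modulo a well-chosen $p$ (possibly a small multiple of $5$, or a value where $R_p$ is especially sparse), so that only a few residue-gaps can occur and four colors suffice to break every potential length-$3$ chain.

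Once the coloring $\chi$ and period $p$ are in hand, the verification is routine and finite: I would confirm that $\chi$ uses at most four colors and that for every $a \in \{0,\dots,p-1\}$ and every pair $\ell, \ell' \in L$ with $\ell, \ell'$ small enough that $\ell \bmod p$ and $\ell' \bmod p$ exhaust $R_p$, the integers $a$, $a+\ell$, $a+\ell'$ are not all the same color whenever they form a genuine increasing diffsequence. Because the coloring is periodic and $R_p$ is finite, this reduces to checking finitely many residue patterns, and extends the no-$3$-term-$L$-diffsequence property to all of $\N$. This immediately gives $\Delta(L,3;4) = \infty$, hence $L$ is not $4$-accessible and $doa(L) \le 3$.

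The main obstacle is not the verification but the \emph{existence} of such a coloring, i.e.\ finding the right period $p$ and color assignment. A priori it is not obvious that four colors suffice to kill all $3$-term $L$-diffsequences periodically; the difficulty is that $L$ contains both very small gaps (such as $1,2,3$) and a rich set of larger gaps, so the constraint graph on residues is dense and a careless choice of $p$ will force more than four colors. The real work — which the paper delegates to the SAT solver — is to certify that a suitable small period admits a proper $4$-coloring of the associated conflict structure; once such a witness is produced, the argument collapses to a finite, mechanical check.
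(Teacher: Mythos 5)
Your high-level strategy is exactly the paper's: both proofs use a coloring that is periodic modulo a small $p$, exploit the (pure) periodicity of the Lucas numbers modulo $p$ to make avoidance a finite check on residues, and conclude $\Delta(L,3;4)=\infty$. However, as written your proposal has a genuine gap: you never produce the witness coloring, and you yourself identify its existence as ``the real work.'' A proof that says ``search for a suitable $p$ and color assignment'' without exhibiting one does not establish the lemma; the finite-verification framework is the easy half, and everything hinges on the explicit object. Two smaller inaccuracies compound this: the paper does \emph{not} delegate this lemma to {\scshape CaDiCaL} (the SAT solver is used elsewhere, for the table values), and your guess that the right modulus is a multiple of $5$ points in the wrong direction --- the mod-$5$ observation in the paper only yields $\Delta(L,2;5)=\infty$, a different and weaker-type statement.

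For the record, the paper's witness is period $p=8$ with color classes $\{1,7\}$, $\{2,4\}$, $\{3,5\}$, $\{0,6\}$ of residues modulo $8$. The Lucas numbers modulo $8$ cycle through $2,1,3,4,7,3,2,5,7,4,3,7$, so no Lucas number is $\equiv 0$ or $6 \pmod 8$. The classes are chosen so that the difference of two integers in the same class is always $\equiv 0, 2,$ or $6 \pmod 8$; hence in a monochromatic pair $x_1 < x_2$ with $x_2-x_1 \in L$ the gap is forced to be $\equiv 2 \pmod 8$, and then any same-colored $x_3 > x_2$ satisfies $x_3-x_2 \equiv 0$ or $6 \pmod 8$, which no Lucas number achieves --- a contradiction requiring no exhaustive residue-pair enumeration at all. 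So your reduction-to-finite-check plan is sound and would work once a witness is found, but the lemma's content is precisely this witness, and your proposal stops short of it.
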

\begin{proof}
Define a coloring $\chi : \N \to [4]$ as follows: 
$$ \chi(n) = \begin{cases}
1 & \text{if } n \equiv 1,7 \pmod 8 ,\\
2 & \text{if } n \equiv 2,4 \pmod 8, \\
3 & \text{if } n \equiv 3,5 \pmod 8 ,\\
4 & \text{if } n \equiv 0,6 \pmod 8. 
\end{cases}
$$
We suppose towards contradiction that $x_1,x_2,x_3$ is a 3-term $L$-diffsequence with $\chi(x_1) = \chi(x_2) = \chi(x_3)$.
Observe that the Lucas numbers are periodic modulo 8 and congruent to $2,1,3,4,7,3,2,5,7,4,3,7,2,1,\dots \pmod 8$, so no Lucas number is congruent to $0$ or $6$ modulo 8. Thus by the definition of $\chi$, one can check that we must have $x_2-x_1 \equiv 2 \pmod 8$. But then we must have that $x_3 - x_2$ is congruent to either 0 or 6 modulo 8, which is a contradiction.
\end{proof}
Table \ref{Table_Lucas} gives some additional computed values of $\Delta(L,k;r)$. 

\begin{center} 
\begin{table}[hbt!]\caption{Table of numbers $\Delta(L,k;r)$}\label{Table_Lucas}
\centering
\bigskip

\begin{tabular}{c|cccccc}
\diagbox{$r$}{$k$}  & 2 & 3 & 4 & 5 & 6 & 7  \\
\hline
   2 & 3 & 5 & 7 & 13 & 15 & 21\\
   3 & 4 & 13 & 22 & 51\\
   4 & 5 & $\infty$ \\
   5 & $\infty$
\end{tabular}

\end{table}
\end{center}

We next study the set $P = \{2,3,5,7,10,12,17,22,\dots\}$, the set of nonzero Perrin (or ``skiponacci") numbers $p_n$, which are given by $p_1 = 3$, $p_2 = 0$, $p_3 = 2$, and $p_n = p_{n-2}+p_{n-3}$ for $n \ge 4$. Table \ref{Table_Perrin} gives some values of $\Delta(P,k;r)$. 
\begin{table}[hbt!]
\centering
\caption{Table of numbers $\Delta(P,k;r)$}
\label{Table_Perrin}
\bigskip
\begin{tabular}{c|cccccc}
\diagbox{$r$}{$k$}  & 2 & 3 & 4 & 5 & 6 & 7  \\
\hline
   2 & 5 & 9 & 13 & 19 & 23 & 31\\
   3 & 7 & 17 & 28 & 43\\
   4 & 13 & 35 & 81\\
   5 & 18 & 107\\
   6 & 25 \\
   7 & $>2000$\\
\end{tabular}
\end{table}
The most difficult computation was the upper bound $\Delta(P,3;5)\le 107$, which required over 5 hours using {\scshape CaDiCaL}.
\subsection{Open questions}
The proofs of Theorem \ref{Theorem_doa_F_le_3} and Theorem \ref{Theorem_dor_AP_F} give new bounds on $doa(F)$ and $doa(AP_F)$ by showing, respectively, that $\Delta(F,4;4) =\infty$ and $n(AP_F,5;2) =\infty$. It is known from \cite{Fibonacci_Ramsey} that $\Delta(F,2;4) = 9$, and a SAT solver easily shows $n(AP_F,3;2) = 17$. However, we were unable to compute the values $\Delta(F,3;4)$ and $n(AP_F,4;2)$. 

Using a greedy algorithm, we were able to find a 2-coloring of $[50000]$ that does not contain any 3-term $G$-diffsequences, which implies the bound $\Delta(G,3;2) > 50000$. The coloring used in the proof of Lemma \ref{Lemma_2Coloring_implies_4Coloring} then gives the bound $\Delta(F,3;2) > 100000$. Moreover, with a SAT solver we were able to show $n(AP_F,4;2) > 8000$. Given the large gaps between $\Delta(F,3;4)$ and $\Delta(F,4;4)$ as well as $n(AP_F,3;2)$ and $n(AP_F,4;2)$, we feel there is sufficient evidence to make the following conjecture. 

\begin{conjecture}
$\Delta(F,3;4) = n(AP_F,4;2) = \infty$.
\end{conjecture}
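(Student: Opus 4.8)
The plan is to prove both equalities by exhibiting explicit colorings, reusing the Sturmian-word and fractional-part machinery developed for Theorems \ref{Theorem_doa_F_le_3} and \ref{Theorem_dor_AP_F}. For the first equality I would reduce $\Delta(F,3;4) = \infty$ to $\Delta(G,3;2) = \infty$: the construction in Lemma \ref{Lemma_2Coloring_implies_4Coloring} turns any $2$-coloring of $\N$ avoiding monochromatic $k$-term $G$-diffsequences into a $4$-coloring avoiding $k$-term $F$-diffsequences, and this argument is valid for each fixed $k$, in particular $k=3$. So it suffices to construct a $2$-coloring of $\N$ with no monochromatic $3$-term $G$-diffsequence. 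For the second equality I would directly construct a $2$-coloring of $\N$ with no monochromatic $4$-term arithmetic progression whose common difference lies in $F$.

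For $\Delta(G,3;2) = \infty$, the natural first attempt is the word $S$ from Theorem \ref{Theorem_doa_F_le_3}, but this cannot work: the per-step displacement bounds in Lemma \ref{Lemma_master_lemma} are $-0.38$, $-0.28$, and $-0.52$ for transitions of type $a$, $b$, $c$, and none of the first two reaches $-\tfrac12$ in absolute value. A $3$-term diffsequence produces only two steps, so unless both steps have type $c$ the telescoped sum $\fracpt{m_3\phi}-\fracpt{m_1\phi}$ need not leave $(-1,0]$, and no contradiction arises. The plan is therefore to locate a genuinely different $2$-coloring whose positions of ones again form a Beatty sequence $\floor{m\alpha}$, so that the bookkeeping of Lemma \ref{Lemma_fibWord_floor_phi} applies, but for which \emph{every} admissible single step displaces $\fracpt{m\phi}$ by more than $\tfrac12$ in absolute value; two such steps then telescope past $-1$. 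The greedy $2$-coloring of $[50000]$ from Section \ref{Section_Experiments}, together with an OEIS lookup of its run-length or ones-position sequence, is the intended source for such a word, and proving that it avoids $3$-term $G$-diffsequences would reduce to a finite case analysis on $n \bmod 4$ (and small $n$) exactly as in Lemma \ref{Lemma_master_lemma}.

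For $n(AP_F,4;2) = \infty$, I would start from the word $T$ of Theorem \ref{Theorem_dor_AP_F}. That proof already rules out all $4$-term monochromatic progressions with Fibonacci gap in every case \emph{except} the all-zeros case with $n=4$, i.e.\ common difference $f_4 = 3$, where only the five-term bound $|d_1+d_2+d_3+d_4|\ge 1$ was established. The plan is to modify $T$, or to find a related morphic word, that additionally destroys the monochromatic $4$-term zero-progressions of gap $3$ while preserving the displacement estimates $d_i \approx \pm\tfrac{1}{\sqrt5},\,\pm\tfrac{2}{\sqrt5}$ (and their $\epsilon=\pm4$ analogues) that make the telescoping argument succeed for all other gaps. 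One would then re-run the sign analysis of the $d_i$ from the proof of Theorem \ref{Theorem_dor_AP_F}, now needing $|d_1+d_2+d_3|\ge 1$ uniformly in $n$, including $n=4$.

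The hard part in both cases is the existence step: finding a coloring that is simultaneously explicit, Beatty- or morphism-generated so that its structure is accessible to the $\fracpt{m\phi}$ method, and strong enough to kill the shorter progressions. The obstruction is precisely that the Sturmian colorings $S$ and $T$ have per-step displacements too small in magnitude --- roughly $0.28$ to $0.52$ --- to force a contradiction in only two or three steps, and for $T$ the single bad gap $f_4=3$ already breaks the bound. Overcoming this likely requires either a new word with uniformly larger displacements, or a hybrid coloring that overlays a congruence condition (in the spirit of the Lucas lemma of Section \ref{Section_Experiments}) onto the Sturmian data to eliminate the remaining bad cases. Because no such provably correct coloring is yet known --- only the finite SAT- and greedy-verified colorings on $[50000]$ and $[8000]$ --- the statement is offered as a conjecture rather than a theorem.
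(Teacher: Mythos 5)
The statement you were asked about is not proved in the paper at all: it appears as a conjecture, supported only by computational evidence, namely the greedy-coloring bound $\Delta(G,3;2) > 50000$ (which, via the Lemma \ref{Lemma_2Coloring_implies_4Coloring} construction applied with $k=3$, yields a lower bound above $100000$ for the corresponding $F$-diffsequence problem) and the SAT-verified bound $n(AP_F,4;2) > 8000$. Your proposal rightly declines to claim a proof and instead isolates the genuine open problem --- the existence of an explicit, structurally analyzable coloring --- so your verdict coincides exactly with the paper's. Your supporting analysis is also sound on both fronts: the reduction of $\Delta(F,3;4)=\infty$ to $\Delta(G,3;2)=\infty$ is valid for each fixed $k$ and is the same reduction the paper uses to propagate its greedy bound, and your observation that the per-step bounds of Lemma \ref{Lemma_master_lemma} (drops of only $0.28$, $0.38$, $0.52$) cannot telescope past $-1$ in two steps correctly explains why the word $S$ gives nothing for $3$-term $G$-diffsequences.

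In fact, both of your negative diagnoses can be upgraded from ``the estimates are too weak'' to ``the words genuinely fail,'' which confirms that no refinement of the paper's analysis of $S$ or $T$ can close the conjecture. One checks directly from the definitions that $S(7) = S(11) = S(15) = 1$, and $11 - 7 = 15 - 11 = 4 \in G$, so $S$ contains a monochromatic $3$-term $G$-diffsequence; likewise $T(18) = T(21) = T(24) = T(27) = 0$, a monochromatic $4$-term arithmetic progression with common difference $f_4 = 3 \in F$, realizing exactly the bad case $n = 4$, $\epsilon_i \ne 0$ that you flagged as the loophole in the proof of Theorem \ref{Theorem_dor_AP_F}. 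So any proof must begin with a new coloring, and since none is known --- only the finite colorings of $[50000]$ and $[8000]$ --- the statement's status as a conjecture is correct, and your proposal's assessment of where the difficulty lies is accurate and complete relative to what the paper contains.
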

\section*{Acknowledgements} The author would like to thank Jes\'us De Loera for helpful comments on this paper and Clark Kimberling for uploading many relevant OEIS entries. This project was supported by National Science Foundation grant DMS-1818969.
\clearpage
\bibliographystyle{abbrv}
\bibliography{Bibliography.bib}

% \appendix
% \begin{appendix}
% \section{SAT Encoding}
% Here we give the details of the SAT encoding used to compute values of $\Delta(D,k;r)$ and $n(AP_D,k;r)$ in Section \ref{Section_Experiments}. 
% \end{appendix}
\end{document}